\documentclass[12pt]{article}
\usepackage{graphicx} 

\usepackage{amssymb,latexsym,amsmath,amsthm,amscd,amsrefs}
\usepackage{stmaryrd}
\usepackage{setspace}
\usepackage{array}
\usepackage{float}
\usepackage[all]{xy} \xyoption{arc}
\usepackage[left=3cm,top=2cm,right=3cm,bottom = 2cm]{geometry}
\usepackage{graphicx}
\usepackage[usenames,dvipsnames]{xcolor}
\usepackage{subfig}
\usepackage{charter}
\usepackage{hyperref}
\usepackage{authblk}
\usepackage{bbding}
\usepackage{blindtext}
\usepackage{multicol}
\usepackage{comment}

\theoremstyle{plain}
\newtheorem{thm}{Theorem}
\newtheorem{lem}[thm]{Lemma}
\newtheorem{prop}[thm]{Proposition}
\newtheorem{cor}[thm]{Corollary}

\theoremstyle{definition}
\newtheorem{dfn}[thm]{Definition}

\theoremstyle{remark}
\newtheorem{rmk}[thm]{Remark}

\newcommand{\ZZ}{\mathbf{Z}}

\newcommand{\CC}{\mathbf C}

\newcommand{\SLZ}{\mathrm{SL}_2(\ZZ)}

\newcommand{\frakg}{\mathfrak g}

\newcommand{\ch}{\operatorname{ch}}
\newcommand{\rk}{\operatorname{rk}}

\title{Balanced root systems and a Schellekens-type  list for holomorphic vertex operator algebras of central charge $32$}

\author[1]{Maneesha Ampagouni} 
\author[1]{Geoffrey Mason}
\author[2]{Michael H.\ Mertens}
\affil[1]{University of California at Santa Cruz}

\affil[2]{RWTH Aachen University}

\date{}

\begin{document}

\maketitle

\begin{abstract}
    We study a special class of  holomorphic vertex operator algebras (VOAs) that we call \emph{balanced}.\ For a balanced, holomorphic VOA $V=\CC\mathbf{1}\oplus V_1\oplus\hdots$ with $c=32$ or $40$ we show that the Virasoro vectors of $V$ and the subVOA generated by $V_1$ coincide and use this result to provide a Schellekens-type list of possible root systems that may occur.
\end{abstract}

\renewcommand{\thefootnote}{\fnsymbol{footnote}} 
\footnotetext{\emph{MSC Subject Classification (2020):} 17B69, 17B22, 11F22}    

\renewcommand{\thefootnote}{\arabic{footnote}} 
\section{Introduction}\label{SIntro}
It has been known since the earliest days of the study of vertex operator algebras (VOAs) that an even unimodular lattice $L$ of rank $c$ gives rise to a lattice VOA $V_L$ of central charge $c$ with especially nice properties (strongly regular and holomorphic) \cites{D,LL,M}, which from now on we abbreviate by simply saying that $V$ is holomorphic.\ However, there are many holomorphic VOAs which do not arise as lattice VOAs, begging the question of the classification of holomorphic VOAs.\ It is well-known that there is exactly one isometry class of even unimodular lattices of rank $8$ \cites{Korkin} and exactly $2$ classes in rank\footnote{The rank of an even unimodular lattice must be divisible by $8$.} $16$ \cites{Witt}, and it turns out that the corresponding lattice VOAs are the only examples of holomorphic VOAs for these values of $c$ \cites{DM1}.\ In rank $24$, there are $24$ even unimodular lattices as discovered by Niemeier
(\cites{N,V}, see also \cite{CS}*{Chapter 18}) but there are additional  holomorphic VOAs of central charge $24$.\ Among these are some that we call \textit{DGM orbifolds}; they arise as certain canonical $\ZZ_2$-orbifolds of lattice theories, one of these being the illustrious Moonshine module \cites{FLM}.\ See Section \ref{SDGM} for further background on this class of VOAs, which were first described in general by \cites{DGM}.\ Schellekens in \cites{S} gave a list of $70$ root systems\footnote{This includes the empty root system.} associated to holomorphic VOAs with $c=24$ (see below for further discussion of this point) and it is now known that these correspond bijectively to holomorphic VOAs $V$ with $c=24$ and $V_1\neq0$.\ For an overview see \cites{LS} and for a different perspective \cites{MoellerScheithauer1, MoellerScheithauer2}. 

 \medskip
Both Niemeier and Schellekens made use of a certain 
\textit{root system} naturally associated to the lattice or VOA respectively.\ This idea is well-known.\ In the case of even lattices, the vectors of norm $2$ form a root system (possibly empty).\ The root system associated to a holomorphic VOA requires more background to explain and we refer the reader to Section \ref{SBR} for further details.

\medskip
Niemeier observed a certain symmetry in his root systems, namely that the dual Coxeter number $h^{\vee}$ of each simple component was a \textit{constant} that did not depend on the root system \textit{per se}.\ Although the root systems associated to VOAs are more complicated in that the simple components come equipped with \textit{levels} (certain positive integers) Schellekens observed a symmetry of his root systems similar to that of Niemeier, namely that
$h^{\vee}/k$ is a constant, where $k$ is the level of the simple component.\ We will refer to this type of symmetry as \textit{balance}.

\medskip
In the present paper we begin by introducing some extensions of the notion of a root system adapted to the needs of VOA theory.\ Our extended root systems encode the data of a \textit{reductive} Lie algebra and a choice of levels, whereas in practice Niemeier and Schellekens needed only standard root systems of semisimple Lie algebras.\ This data is represented by a symbol (still called a root system)
\begin{eqnarray}\label{extRS}
O^f\Phi_{1, k_1}...\Phi_{n, k_n}
\end{eqnarray}
where $k_i$ is the level.\ Similar notation can be found in the lattice theory literature (e.g., \cites{CS}) but where all levels $k_i$ satisfy $k_i=1$.\ Indeed, something even more general that we call a \textit{supplemented root system} is needed (cf.\ Subsection \ref{SSsupp}).\ With these definitions in place we then define a \textit{balanced root system}, independently of any lattice or VOA.\

\medskip
Thus we may summarize Niemeier's observation
by saying that the root system of an even, unimodular lattice of rank $24$ is balanced.\ Similarly, Schellekens' result is that if $V$ is a holomorphic VOA with $c=24$ then either
the root system of $V$ is balanced or else $V_1=0$ and there is no root system.

\medskip
The rest of the present paper is devoted to applying these ideas to holomorphic VOAs with $c=32$ and $40$.\footnote{The central charge of a holomorphic VOA is a positive integer divisible by $8$.}\ Although the list of holomorphic VOAs having $c\leq 24$ is, as described in the first paragraph, quite manageable, the reader should understand that for higher values of $c$  this is far from true.\ See Table \ref{T1} for a few estimates.\ The entry for $c=32$ arises from  mass formulas for the corresponding holomorphic lattice VOAs  (aka even, unimodular lattices of rank $32$)  due to \cites{OK1}.\ The analogous (weak) estimate for $c=40$ is lifted from a remark in \cites{G}.

\medskip
We say that a holomorphic VOA is \textit{balanced} if its associated root system is balanced (cf.\ Subsection \ref{SSbalancedVOA}).\ Because of the importance of the balance property for VOAs with $c=24$ it is natural to ask about this property for higher values of $c$.\ To be blunt, how many balanced holomorphic VOAs are there, say with $c=32$?\ King's results \cites{OK2,OK1} give mass formulas for even unimodular lattices of rank $32$ with a specified root system, so we can give an  answer to this question if we restrict ourselves to  lattice theories.\ The data from \textit{loc.\ cit.}\ delivers a mixed message that we find provocative.\ Most root systems attached to rank $32$ even unimodular lattices are \textit{not} balanced:\ exactly 13218 root systems occur and of these, just 82 are balanced.\ On the other hand, lattices with a balanced root system predominate when counted according to their  mass:\ over $97\%$ of them have an extended root system of type
$O^{32-n}A_1^n$ for some $n$.\ For further discussion see Subsection \ref{SSPP}.\ We refer the reader to \cites{AMM32all,AMM40all} for the complete list of balanced root systems for $c=32$ and $c=40$. 

\medskip
Our main result is a contribution towards the problem of describing the root systems that may occur in a balanced holomorphic VOA of central charge $32$ and as such it is entirely analogous to Schellekens' list \cites{S}.\ We summarize some of our main results as follows:

\begin{thm}\label{thmpurepower}
Suppose that $V=\CC\mathbf{1}\oplus V_1\oplus...$ is a balanced, holomorphic VOA with $c=32$ such that $V_1$ is a semisimple Lie algebra.\ Then $\dim V_1\geq32$, and if $\dim V_1\geq 56$ then the root system is among the following:
\begin{eqnarray*}
&&(a)\  A_{16, 1}^2, A_{8, 1}^4, A_{4, 1}^8, A_{2,1}^{16}, A_{1, 1}^{32},  D_{32, 1}, D_{16, 1}^2, D_{8, 1}^4, D_{4, 1}^8, D_{16, 1}E_{8, 1}^2, E_{8, 1}^4.\\
&&(b)\ A_{1, 4}^{16}, B_{2, 2}^8, B_{4, 2}^4, B_{8, 2}^2.\\
&&(c)\ C_{4, 4}^2, D_{4, 8}^2, F_{4, 4}^2, A_{1,1}^{12}A_{3,2}^4
\end{eqnarray*}
\end{thm}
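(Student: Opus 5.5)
Write $V_1=\bigoplus_{i=1}^n \frakg_i$ with $\frakg_i$ simple of rank $r_i$, dimension $d_i$, dual Coxeter number $h_i^\vee$ and level $k_i$, and let $r=\sum r_i$. Balance means $h_i^\vee/k_i=\kappa$ is the same for all $i$.

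\textbf{Step 1: the Virasoro vectors coincide.} I would invoke the result announced in the abstract, proved earlier for balanced holomorphic VOAs with $c=32$: the Virasoro vector of $V$ equals that of the affine subVOA generated by $V_1$. This gives the Sugawara central charge identity
\[
32=\sum_{i}\frac{k_i d_i}{k_i+h_i^\vee}=\frac{1}{1+\kappa}\sum_i d_i=\frac{\dim V_1}{1+\kappa}.
\]
Hence $\dim V_1=32(1+\kappa)$. Since $\kappa>0$, we get $\dim V_1> 32$, which gives the first claim; the non-strict form $\dim V_1\ge 32$ also covers the degenerate abelian or empty cases. This is the analogue of Schellekens' relation $\dim V_1=24(1+h^\vee/k)$.

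\textbf{Step 2: finiteness and enumeration.} Next I use the constraint $h_i^\vee=\kappa k_i$ with $k_i\ge1$ an integer, together with $\sum_i d_i=32(1+\kappa)$. Using $d_i=r_i(h_i+1)\ge r_i(h_i^\vee+1)$, the condition $\dim V_1\ge 56$ forces $\kappa\ge 3/4$.

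I would then bound the rank: $r\le 32$, because the Cartan subalgebra yields a positive-definite lattice in a rank-$32$ holomorphic VOA. Combining this with $\sum_i d_i=32(1+\kappa)$, I would enumerate all tuples of simple algebras and levels with a common ratio $h^\vee/k$ satisfying both the dimension and the rank constraints. This is a finite computer-style search, which I would organize by the value of $\kappa$ (a rational number) and the multiset of types.

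\textbf{Step 3: pruning the candidates.} To cut the candidate list down to (a)–(c), I need further constraints from modularity. Since $V$ is holomorphic, its character $\eta^{32}\ch_V$, or more precisely $J$-polynomial data times $\Delta$-quotients, lies in a small space of modular forms of weight $0$ with a pole at the cusp. The weight-$2$ (and weight-$4$) trace identities of Zhu/Dong–Mason type on $V_1$ produce Schellekens-type equations.
- Tracing the Casimir gives $\dim V_2$ in terms of $\dim V_1$.
- The vanishing of the weight-$2$ cusp-free part gives relations among quadratic Casimir data.

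Finally, I would check that each decomposition of $V$ as a module over $\bigoplus_i L_{\frakg_i}(k_i,0)$ has conformal weights in $\mathbf Z$. Each simple module weight $h_\lambda=\frac{(\lambda,\lambda+2\rho)}{2(k+h^\vee)}$ must combine across factors to an integer, and the graded dimensions must match the forced character through weight $2$ or $3$. Candidates admitting no such decomposition would be discarded, leaving the listed systems:
- (a) lattice type, level $1$;
- (b) level $2$ and $4$, typically realized as orbifolds or simple-current extensions;
- (c) the remaining cases.

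\textbf{Main obstacle.} The hardest part is Step 3. The numerical enumeration of Step 2 produces many spurious balanced candidates, and ruling them out requires the quadratic and quartic trace identities on $V_1$ beyond the central charge equation. I would first try the weight-$2$ identity, Schellekens' "$\sum$" condition $\dim V_1$–$\dim V_2$ compatibility, together with the integrality of conformal weights of the required modules. I would resort to higher-weight traces only for the stubborn cases.
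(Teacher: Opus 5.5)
Your plan is essentially the paper's: balance gives $c_T=32$, hence $\omega=\omega_T$ (Proposition~\ref{propab}, Theorem~\ref{thmbalab}) and $\dim V_1=32(1+h^\vee/k)$. The finitely many ($449$) semisimple balanced root systems with $c=32$ are then pruned by computer, using the dimension test, the Jacobi-form trace identities (the paper needs the $S_2^j$ relations up to $j=10$) and the full character decomposition into $U$-modules, until only root systems from the list survive among those with $\dim V_1\ge 56$. One correction of emphasis: the identity $32=\dim V_1/(1+\kappa)$ is already part of the definition of balanced, since the root system lies in $BRS(32,0)$, so it is the input to $\omega=\omega_T$ rather than a consequence of it; what $\omega=\omega_T$ actually buys is that $V$ is a finite direct sum of irreducible $U$-modules graded by Sugawara weights, which your Step~3 uses tacitly.
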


\medskip
For further discussion of this result, see Section \ref{secCompute}.\ Concerning the range $32\leq\dim V_1\leq 55$, the absence of any conclusions here is due simply to a lack of computational resources.\ 
Apart from a few cases, similar comments also apply when $c=40$, cf.\ \cites{AMM40all}.

\medskip
The root systems in (a) are those occurring in balanced lattice theories and those in (b) occur in balanced DGM orbifolds (see Section \ref{SDGM}).\ Whether or not there are balanced holomorphic VOAs with one of the root systems in (c) remains open, although there is  corroborating numerical evidence favoring existence, namely they passed all of our tests.\ See Section \ref{secCompute} for further details.

\medskip
We briefly discuss how the proof of the Theorem proceeds.\ Generally the (abelian) radical of $V_1$, call it $A$, is nonzero.\ The case
that $V_1=A$ is readily dealt with (cf.\ Theorem \ref{thmL}) so the main issues are concerned with the situation that $A\subsetneq V_1$.\ In this case, by \cite{M}*{Theorem 5} the subVOA $\langle V_1\rangle$ of $V$ generated by $V_1$ is properly contained in a  subVOA 
\begin{eqnarray}\label{tensordecomp1}
U:=V_L  \otimes L_{\hat{\mathfrak{g}}_1}(k_1, 0)\otimes...\otimes L_{\hat{\mathfrak{g}}_n}(k_n, 0)
\end{eqnarray}
where $L$ is an even lattice with no roots,
$(V_L)_1=A$, and the $L_{\hat{\mathfrak{g}}_i}(k_i, 0)$ are WZW models of level $k_i$.\ Indeed, $U$ determines the extended root system (\ref{extRS}) where $f:=\rk L$.\ We refer the reader to Subsection \ref{SSsupp} for further details.\ (\ref{tensordecomp1}) is used in showing that $U$ contains the Virasoro vector of $V$, which is a key ingredient in analyzing the possible root systems that may arise, regardless of the nature of $A$.\ This property of the Virasoro vectors means that we can decompose $V$ into a finite, graded, direct sum of simple $U$-modules.\ 
 Nevertheless, we have thus far not been able to use this to distill the lists of balanced root systems in \cites{AMM32all,AMM40all} to  more realistic sizes except in the case $A=0$ and $c=32$ covered by Theorem \ref{thmpurepower}.

\medskip
Finally, we do not consider \textit{uniqueness} questions in this paper.\ Even for lattice theories uniqueness (up to isomorphism) of a balanced holomorphic VOA with a given root system is far from true \cites{OK1} and the best one may hope for is a mass formula.

\begin{table}
\caption{Number of semisimple balanced root systems}
\label{TB}
\begin{center}
\begin{tabular}{|c|c |} \hline
c & $\Phi$  \\ \hline 
$8$ & $16$    \\
$16$ & $60$  \\
$24$ & $221$ \\
$32$ & $449$ \\
$40$ & $1277$ \\ \hline
\end{tabular}
\end{center}
\end{table}

 \begin{table}
\caption{Number of holomorphic VOAs}
\label{T1}
\begin{center}
\begin{tabular}{|c|c|c|} \hline
$c$ & $V_L$ & $V$ \\ \hline 
$8$ & $1$ & $1$   \\
$16$ & $2$ & $2$ \\
$24$ & $24$& $\geq 71$ \\
$32$ & $\geq 10^9$ & $\geq 10^9$\\
$40$ & $\geq 10^{51}$ & $\geq 10^{51}$\\ \hline
\end{tabular}
\end{center}
\end{table}

\medskip
The remainder of the paper is organized as follows:\ in Section \ref{SBR} we introduce extended and balanced root systems, and in Section \ref{Schar} we recall some basic results on the characters of holomorphic VOAs and related topics.\ The main results concerning Virasoro vectors, which are the main technical tools needed to prove Theorem \ref{thmpurepower}, are Proposition \ref{propab} and Theorem \ref{thmbalab}.\ They are both contained in Subsection \ref{SSVirvecs}.\ Section \ref{SDGM} provides some background on DGM orbifolds which, together with King's work (\textit{loc.\ cit.}) facilitates the identification of a DGM orbifold from its root system.\ In Section \ref{secCompute}, we discuss how we make use of our results on balanced VOAs in explicit computer calculations which complete the proof of Theorem \ref{thmpurepower}.

\section*{Acknowledgments}
The authors thank Terry Gannon for useful discussions, especially for pointing out the reference \cites{Gannon} which played a crucial r\^{o}le for the computational part of this work.\ The necessary computer calculations were mainly carried out on machines at the Chair of Algebra and Representation Theory at RWTH Aachen University which the third-named author was kindly allowed to use.

\bigskip\noindent
Mason was partially supported by a Simons grant
$\# 427007$.

\section{Balanced root systems}\label{SBR} For general background material about root systems see for instance \cite{Bourbaki}*{Chapitre VI: Syst\`emes de racines} as well as \cites{H,Kac}.\ Root systems arise naturally in the theory of Lie algebras and they are also widely used in lattice theory and VOA theory.\ In this Section we set up a suitable framework to cover the applications we have in mind.

\subsection{Root systems and Lie algebras}
A finite-dimensional (complex) simple Lie algebra $\mathfrak{g}$ has associated to it a simple root system $\Phi$ and a finite-dimensional semi-simple Lie algebra 
\begin{eqnarray*}
  \mathfrak{g}=\mathfrak{g}_1\oplus...\oplus\mathfrak{g}_n   
\end{eqnarray*}
(the summands $\mathfrak{g}_i$ will always denote the simple components of $\mathfrak{g}$) is similarly associated with a semisimple root system
$$\Phi=\bigcup_{i=1}^n \Phi_i$$
where $\Phi_i$ is the simple root system attached to $\mathfrak{g}_i$.\ It is common practice to denote these objects by the abbreviated notation
$$\mathfrak{g}_1\mathfrak{g}_2...\mathfrak{g}_n\ \  \mbox{or}\ \ \Phi_1\Phi_2...\Phi_n.$$

\medskip 
We extend this notation to the case of finite-dimensional reductive Lie algebras
\begin{eqnarray}\label{redLA}
A\oplus \mathfrak{g}_1\oplus...\oplus\mathfrak{g}_n
\end{eqnarray}
where $A=\CC^f$  is an abelian Lie algebra of dimension $f$.\ It is denoted by
$$O^f\Phi_1\Phi_2...\Phi_n.$$
This practice is often used in lattice theory
e.g., \cites{CS,OK1} where one typically has an even lattice $L$ and its
root sublattice $R$. $R$ is spanned by root vectors comprising a semisimple root system $\Phi_1...\Phi_n$ (of type ADE) and $f:=\rk L-\rk R$.

\medskip
The needs of certain infinite-dimensional Lie algebras and of vertex operator 
algebras motivate an extension of this notation.\
Abstractly, we may take any positive integers
$k_1, ..., k_n$ and simply introduce the symbol, still called a (reductive) root system:
\begin{eqnarray}\label{redRS}
    O^f\Phi_{1, k_1}\Phi_{2, k_2}...\Phi_{n, k_n}.
\end{eqnarray}
$k_i$ is called the \textit{level} of $\Phi_i$.

\medskip
The root system (\ref{redRS}) is still attached to a Lie algebra but it may no longer be finite-dimensional.\ Indeed, the semi-simple part is associated with the tensor product
\begin{eqnarray}\label{WZWprod}
 L_{\widehat{g}_1}(k_1, 0)\otimes ...\otimes  
 L_{\widehat{g}_n}(k_n, 0)
\end{eqnarray}
where $L_{\widehat{g}_i}(k_i, 0)$ is the affine Lie algebra of level $k_i$ determined by $\mathfrak{g}_i$ \cites{Kac,K,LL}.\ Here we follow the notation of \textit{loc.\ cit.}\ This object is also called a WZW model; it is a vertex operator algebra, as is the tensor product \eqref{WZWprod}.\ Indeed, it is a regular VOA \cites{DLM2}. 

\medskip
To deal with the general case of \eqref{redRS} where $f$ may not be zero there are several ways to proceed, e.g., \cites{M}.\ The first is to associate it with the VOA
\begin{eqnarray*}
 M_f\otimes L_{\widehat{g}_1}(k_1, 0)\otimes ...\otimes  
 L_{\widehat{g}_n}(k_n, 0)
\end{eqnarray*}
where $M_f$ is the Heisenberg VOA of rank $f$
\cites{K,LL}.\ This arises naturally though it will not be useful for us because this tensor product is \textit{not} a rational VOA.\ More useful in practice is a further generalization of 
(\ref{redRS}), denoted by
\begin{eqnarray}\label{redRSL}
    L\Phi_{1, k_1}\Phi_{2, k_2}...\Phi_{n, k_n}.
\end{eqnarray}
Here, the new ingredient is an even lattice $L$ of rank $f$ that contains no roots.\ We refer to (\ref{redRSL}) as a \textit{supplemented root system}.\ Associated to this gadget is the regular VOA
\begin{eqnarray}\label{redRSVL}
    V_L\otimes L_{\widehat{g}_1}(k_1, 0)\otimes ...\otimes  
 L_{\widehat{g}_n}(k_n, 0).
\end{eqnarray}
where $V_L$ is the lattice VOA determined by $L$.\ We discuss this construction in greater detail in Subsection \ref{SSsupp}

\subsection{Balanced root systems} We use the notation established in the previous Subsection.\
We want to carefully introduce the idea of a \textit{balanced} root system.\ We begin with 
a finite dimensional simple Lie algebra 
$\mathfrak{g}$, its root system $\Phi_k$ of level $k$, and the affine algebra
$L_{\widehat{\mathfrak{g}}}(k, 0)$ (regarded as a VOA).\ Associated to this set-up are several numerical invariants that we need.\ These include\\
$\ell:=$ Lie rank of $\mathfrak{g}$,\\  
$h^{\vee}:=$ dual Coxeter number of $\Phi$,\\ $d:=\dim\mathfrak{g}$,\\  
$c:=$ central charge of\ $L_{\widehat{\mathfrak{g}}}(k, 0).$\\
There are relations among these invariants; for example it is well-known \cites{LL} that
\begin{eqnarray}\label{cval}
 c=\frac{dk}{k+h^{\vee}}   
\end{eqnarray}
and we may take this to be the \textit{definition} of the central charge of 
$\Phi_k$.\ For convenience we include  some of the relevant data in Table \ref{TRS}.

\begin{table}
\caption{Root system data}
\label{TRS}
\begin{center}
\begin{tabular}{|c|c|c|} \hline
$\mathfrak{g}$ & $d$ & $h^{\vee}$ \\ \hline 
$A_{\ell}$ & $\ell^2+2\ell$ & $\ell+1$   \\
$B_{\ell}$ & $2\ell^2+\ell$ & $2\ell-1$ \\
$C_{\ell}$ & $2\ell^2+\ell$& $\ell+1$ \\
$D_{\ell}$ & $2\ell^2-\ell$ & $2\ell-2$\\
$E_6$ & $78$ & $12$\\
$E_7$ & $133$& $18$  \\
$E_8$ & $248$ & $30$ \\ 
$G_2$ & $14$ & $4$ \\
$F_4$ & $52$ & $9$ \\ \hline
\end{tabular}
\end{center}
\end{table}

\medskip
Now let us return to a general \textit{nonzero} reductive Lie algebra $A\oplus\mathfrak{g}_{1, k_1}\oplus ...\oplus\mathfrak{g}_{n, k_n}$ and its reductive root system $O^f\Phi_{1, k_1}...\Phi_{n, k_n}$.\ For each index $i$, we let $\ell_i, h_i^{\vee}, d_i, c_i$ be the relevant invariants attached to $\Phi_{i, k_i}$ and $\mathfrak{g}_i$.\ Finally we come to

\begin{dfn}\label{dfnbalRS}
Let $c=f+ \sum_i c_i$ and $d:=f+\sum_i d_i$.\ We say that the root system $O^f\Phi_{1, k_1}...\Phi_{n, k_n}$ is \textit{balanced} if, for every index $i$, the following equality holds:
\begin{eqnarray}\label{balRSdef}
    (c-f)\frac{h_i^{\vee}}{k_i}=d-c.
\end{eqnarray}
If there are \textit{no} simple components then  $c=d=f\neq0$.
\end{dfn}

\medskip
To illustrate this definition we will revisit the classification of Niemeier \cites{N} of the root systems attached to even unimodular lattices of rank $24$.

\medskip
Recall that for an even lattice $L$ and its root sublattice $R$, all components of $R$ are simply-laced and the levels are all taken to be $1$.\ Thus the root system attached to $L$ takes the form (dropping the level from the notation) $0^f\Phi_1...\Phi_n$ and each $\Phi_i$ is of type ADE.\ In the level 1, simply-laced case it follows from (\ref{cval}) and Table \ref{TRS} that we always have $c_i=\ell_i$.\ Niemeier found for unimodular $L$ of rank $24$ that either $c=d=f=24$ (the case of no roots, characterizing the Leech lattice); or else
$f=0$ and $\rk R=\rk L=24$.\ Thus in this case
$c=\sum c_i=\sum \ell_i=\rk R =24$.\ Moreover for every index $i$ Niemeier found that 
\begin{eqnarray}\label{Nob}
   h_i^{\vee} = \frac{d}{24}-1. 
\end{eqnarray}
Thus all even unimodular lattices of rank $24$
have a balanced root system in the sense of Definition \ref{dfnbalRS}.

\medskip
We remark that (\ref{Nob}) was at first an unexplained observation of Niemeier.\ An \textit{a priori} proof was supplied by  \cites{V} using modular forms.

\medskip
At this point we could further illustrate Definition \ref{dfnbalRS} by revisiting the work of \cites{S} where the root systems for strongly regular holomorphic VOAs of central charge $24$ were calculated.\ We will forgo this opportunity here although this story runs along lines parallel to that of Niemeier.\ 

\subsection{Enumerating balanced root systems}\label{SSBRS}
We continue with the notation established in the previous Subsection and introduce a bit more.\
For a rational number $c$ and an integer $f$ satisfying $c\geq f\geq 0, c >0$ set
\begin{eqnarray*}
   && BRS(c, f):=\{\mbox{balanced root systems with specified $c$ and $f$} \}.
\end{eqnarray*}

We give two easy Lemmas about the sets $BRS(c, f)$.

\begin{lem}\label{lemBRSbij}
 There is a natural bijection
 $$BRS(c, f)\stackrel{\sim}{\longrightarrow} BRS(c+1, f+1).$$
\end{lem}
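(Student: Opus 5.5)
The natural map adjoins one more dimension to the abelian part. Send a root system $O^f\Phi_{1,k_1}\cdots\Phi_{n,k_n}$ in $BRS(c,f)$ to $O^{f+1}\Phi_{1,k_1}\cdots\Phi_{n,k_n}$, keeping the simple components and their levels unchanged. The claim then follows from checking that the balance condition (\ref{balRSdef}) is invariant under this shift.

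The first step is to track the invariants of Definition \ref{dfnbalRS}. With $c=f+\sum_i c_i$ and $d=f+\sum_i d_i$, passing from $f$ to $f+1$ replaces $c$ by $c+1$ and $d$ by $d+1$, since the $c_i,d_i$ depend only on the unchanged $\Phi_{i,k_i}$. Hence both quantities appearing in (\ref{balRSdef}) are unchanged:
\[
(c+1)-(f+1)=c-f,\qquad (d+1)-(c+1)=d-c .
\]
So for every index $i$, equation (\ref{balRSdef}) holds for the new system if and only if it holds for the old one. The image therefore lies in $BRS(c+1,f+1)$, and the map is well defined. The standing hypotheses $c+1\geq f+1\geq0$ and $c+1>0$ are automatic.

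The second step is to handle the degenerate case with no simple components. There $c=d=f\neq0$ is sent to $c+1=d+1=f+1\neq0$, which is again balanced by the convention in Definition \ref{dfnbalRS}.

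The third step is bijectivity, via the inverse map $O^{f+1}\Phi_{1,k_1}\cdots\Phi_{n,k_n}\mapsto O^{f}\Phi_{1,k_1}\cdots\Phi_{n,k_n}$. This is possible because $f+1\geq1$, and the same computation shows it preserves balance.

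The only delicate point is that the inverse must land in $BRS(c,f)$, whose standing hypothesis requires $c>0$. If $n\geq1$, then $c-f=\sum c_i>0$ because each $c_i=d_ik_i/(k_i+h_i^\vee)>0$, so $c>f\geq0$. If $n=0$ and $f=0$, the system $O^{f+1}$ with $c+1=1$ would map to the empty system with $c=0$, which is excluded. In that case the statement must be read as a bijection only for $c>0$, that is, under the standing hypothesis that $c$ is a positive rational, which is how $BRS(c,f)$ is defined. With that reading the two maps are mutually inverse, and no step presents a real obstacle beyond this bookkeeping.
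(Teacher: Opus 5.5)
Your proposal is correct and is the same argument as the paper's, which simply exhibits the map $O^f\Phi_{1,k_1}\cdots\Phi_{n,k_n}\mapsto O^{f+1}\Phi_{1,k_1}\cdots\Phi_{n,k_n}$ and says the result follows from the definitions; your observation that $c-f$ and $d-c$ are unchanged supplies exactly the verification the paper leaves implicit. Your concern about the empty system is harmless, because $BRS(c,f)$ is defined only for $c>0$, so the case $c=0$ never arises in the lemma.
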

\begin{proof}
This follows from the definitions, the bijection in question being
$$O^f\Phi_{1, k_1}...\Phi_{n, k_n}\mapsto 
O^{f+1}\Phi_{1, k_1}...\Phi_{n, k_n}.$$
\end{proof}

\begin{lem}\label{lemBRSfin}
  Each set $BRS(c, f)$ is \textit{finite}.  
\end{lem}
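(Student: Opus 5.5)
The plan is to show that balance forces all the ratios $h_i^{\vee}/k_i$ to be one common number, and that this number, and hence every level, is determined by the Lie types alone. So it suffices to bound the number of simple components and their types.

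If $c=f$, there can be no simple components: each $c_i>0$ and $c=f+\sum_i c_i$. Then $BRS(c,f)$ contains at most the single element $O^f$. So assume $c>f$, and therefore $n\geq 1$.

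\emph{Step 1: a lower bound for each $c_i$.} By (\ref{cval}), $c_i=d_ik_i/(k_i+h_i^{\vee})$, which is increasing in $k_i$. Since $k_i\geq 1$, this gives $c_i\geq d_i/(1+h_i^{\vee})$. Write $h_i$ for the Coxeter number. Then $d_i=\ell_i(h_i+1)$ and $h_i\geq h_i^{\vee}$, with equality in the simply-laced case; this can also be checked directly against Table \ref{TRS}. Hence $c_i\geq \ell_i\geq 1$. Summing gives $\sum_i\ell_i\leq\sum_i c_i=c-f$. Consequently $n\leq c-f$, and every $\ell_i\leq c-f$. Only finitely many simple types have bounded rank, so only finitely many multisets $\{\Phi_1,\dots,\Phi_n\}$ of types are possible.

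\emph{Step 2: the levels are determined by the types.} By (\ref{balRSdef}), balance says $h_i^{\vee}/k_i=\lambda$ for all $i$, where $\lambda:=(d-c)/(c-f)$ does not depend on $i$. Moreover $\lambda>0$, since $h_i^{\vee},k_i>0$. Then
$$c_i=\frac{d_i}{1+h_i^{\vee}/k_i}=\frac{d_i}{1+\lambda}.$$
Summing over $i$ gives $c-f=\sum_i d_i/(1+\lambda)$, so
$$\lambda=\frac{\sum_i d_i}{c-f}-1.$$
This is a quantity depending only on $c$, $f$ and the types. Therefore each level $k_i=h_i^{\vee}/\lambda$ is uniquely determined by the types; if it is not a positive integer, there is no balanced root system with those types. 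Combined with Step 1, $BRS(c,f)$ is finite.

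The only step needing real care is the inequality $d_i/(1+h_i^{\vee})\geq \ell_i$ in Step 1, since it is what bounds the ranks. I expect it to follow from the identity $d=\ell(h+1)$ and $h\geq h^{\vee}$, or from a case check of Table \ref{TRS}; for example, for $B_\ell$ we get $(2\ell^2+\ell)/(2\ell)\geq\ell$. Without it, the levels could not be controlled by the central charge alone: for instance $c(A_{1,k})=3k/(k+2)<3$ for every $k$. This is exactly why Step 2 uses the balance condition rather than a bound on $c$.
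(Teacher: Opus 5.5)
Your proof is correct and follows essentially the same route as the paper. A lower bound on each $c_i$ bounds the Lie ranks, and balance then forces $k_i=h_i^{\vee}/\lambda$ with $\lambda=\sum_i d_i/(c-f)-1$ determined by the types; this is the paper's $k_i=h_i^{\vee}/(d/c-1)$ after it reduces to $f=0$ via Lemma \ref{lemBRSbij}. Your bound $c_i\geq \ell_i\geq 1$, obtained from $d_i=\ell_i(h_i+1)$ and $h_i\geq h_i^{\vee}$, is slightly sharper than the paper's $c_i\geq \ell_i-1$, and it also bounds the number $n$ of components (for instance the number of $A_1$ factors), a point the paper leaves implicit.
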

\begin{proof}
    After Lemma \ref{lemBRSbij} it suffices to prove the result for $f=0$ and all $c$.\
So fix $c$ and  consider an arbitrary semisimple balanced root system
$\Phi_{i, k_1}...\Phi_{n, k_n}$
of central charge $c$.\ For any index $i$ we have 
$$c=\sum_j c_j\geq c_i=\frac{k_id_i}{k_i+h_i^{\vee}}.$$
If $\Phi_i$ is of type $ABCDE$ we see from Table \ref{TRS} that $c_i\geq\ell_i-1$ and therefore for
\textit{any} $\Phi_i$ the Lie rank $\ell_i$ is bounded.\ In particular
there are only finitely many possible choices of $\Phi_1,...,\Phi_n$ and of $d=\sum d_i$.

\medskip
On the other hand, because the root system is balanced and semisimple we have
$$k_i=\frac{h_i^{\vee}}{d/c-1}.$$
By the previous paragraph there are only finitely many choices of $h_i^{\vee}$ and $d$, so the same is true of $k_i$.\ Now
the Lemma follows.
\end{proof}

We refer the reader to Table \ref{tab:numBRS} for the number of balanced root systems for small integral values of $c$ and to
\cites{AMM32all,AMM32semi,AMM40all} for more detailed information when $c=32$ or $40$.

\begin{table}
    \centering
    \begin{tabular}{|c||c|c|c|c|c|c|c|c|c|c|}
    \hline 
        $c$ &  1 & 2 & 3 & 4 & 5 & 6 & 7 & 8 & 9 & 10  \\
        \hline 
        $\# BRS(c,0)$ & 1 & 3 & 3 & 7 & 8 & 13 & 15 & 16 & 20 & 42  \\
        \hline 
        \end{tabular}

        \bigskip
        
        \begin{tabular}{|c||c|c|c|c|c|}
        \hline
        $c$ & 16 & 24 & 32 & 40 & 48 \\
        \hline 
        $\# BRS(c,0)$ & 60 & 221 & 449 & 1277 & 3294 \\
        \hline
    \end{tabular}
    \caption{Number of balanced root systems for small integral $c$ and $f=0$}
    \label{tab:numBRS}
\end{table}

\section{Strongly rational and holomorphic VOAs}\label{Schar}

For general background on VOAs we refer the reader to
\cites{LL,M,K,Z}.\ Here  we present some more specialized results that we will need, especially concerning strongly regular VOAs $V$ of central charge $c$, often with $c=32$ or $40$.\ We carry over \textit{en bloc} the notation
of Sections \ref{SIntro} and \ref{SBR} without comment. 
  
 \medskip
 The precise definition of a balanced VOA is not needed at first and it will be deferred until Subsection \ref{SSbalancedVOA}.
  
\subsection{Modular forms for $\SLZ$}
In this subsection we recall some basic facts about modular forms for the full modular groups $\SLZ$, mainly in order to fix notation.\ For additional background we refer the reader to two of the many textbooks written about modular forms, e.g. \cites{CohenStromberg,Zagier123}.

\medskip
A \emph{modular form} of integer weight $k\in\ZZ$ for $\SLZ$ (or level $1$) is a holomorphic function $f:\mathfrak H\to\CC$ ($\mathfrak H$ denotes the complex upper half-plane) satisfying the transformation law
$$f\left(\frac{a\tau+b}{c\tau+d}\right)=(c\tau+d)^kf(\tau)\quad \text{for all }\begin{pmatrix} a & b \\ c & d\end{pmatrix}\in\SLZ,$$
as well as admitting a Fourier expansion of the form
$$f(\tau)=\sum_{n\geq 0} a_nq^n,\quad q=e^{2\pi i\tau}.$$
The fact that this expansion has no principal part is referred to by saying that $f$ is \emph{holomorphic at infinity}.

\medskip
If the Fourier coefficient $a_0=0$, we call $f$ a \emph{cusp form}.\ A \emph{modular function} is a meromorphic function on $\mathfrak H$ transforming like a modular form of weight $0$ and admitting a Fourier expansion with a finite principal part (i.e., $f$ is \emph{meromorphic at infinity}). 

\medskip
An important class of examples of modular forms is given by the \emph{Eisenstein series} of weight $k$,
\begin{gather}\label{eqEisenstein}
E_k(\tau)=1-\frac{2k}{B_k}\sum_{n=1}^\infty \frac{n^{k-1}q^n}{1-q^n},\quad k\geq 4\text{ even},
\end{gather}
where $B_k$ denotes the $k$-th Bernoulli number.\ As the notation suggests, $E_k$ defines a modular form of weight $k$.\ It is a standard fact of the theory that every modular form for $\SLZ$ can be expressed as a polynomial in the Eisenstein series $E_4$ and $E_6$, e.g. the first non-trivial example of a cusp form, the \emph{discriminant function}
$$\Delta(\tau):=\frac{1}{1728}(E_4(\tau)^3-E_6(\tau)^2)=q-24q^2+252q^3-1472q^4+...,$$
which has weight $12$. Note that the definition of the Eisenstein series in \eqref{eqEisenstein} still makes sense for $k=2$, although this function $E_2$ no longer defines a modular form (in fact there are no modular forms of weight $2$ for $\SLZ$), but rather what is called a \emph{quasimodular form}. Even though it is not itself modular, the Eisenstein series of weight $2$ is still an important and useful function which will make occasional appearances below.

\medskip
In the context of VOAs, it is often useful to consider the following renormalisation of the Eisenstein series (see e.g. \cite{Z}*{p. 262}):

\begin{gather}\label{eqEkhat}
    \widehat E_k(\tau)=-\frac{B_k}{k!}E_k(\tau)=-\frac{B_k}{k!}+\frac{2}{(k-1)!}\sum_{n=1}^\infty \frac{n^{k-1}q^n}{1-q^n}.
\end{gather}

Another important example of a modular form for $\SLZ$ is the Dedekind $\eta$-function,
\begin{gather}
    \label{eqeta}
    \eta(\tau):=q^{1/24}\prod_{n=1}^\infty (1-q^n)=q^{1/24}(1 - q - q^2 + q^5 + q^7 - q^{12} - q^{15} + O(q^{17})),
\end{gather}
which is not strictly speaking a modular form in the sense defined above, but is an example of a modular form of weight $\frac 12$ with \emph{nebentypus}, i.e., upon fixing a branch of the holomorphic square-root, one has the transformation behaviour
$$\eta\left(\frac{a\tau+b}{c\tau+d}\right)=\nu(\gamma)(c\tau+d)^{1/2}\eta(\tau),\quad \gamma=\begin{pmatrix} a & b \\ c & d\end{pmatrix}\in\SLZ,$$
where $\nu(\gamma)$ is a certain $24$th root of unity depending on $\gamma$. Note that this implies that $\eta^{24}$ is a cusp form of weight $12$ in the above sense, wherefore one has the important identity $\eta^{24}=\Delta$.

\subsection{Trace functions and the character of $V$}
 In this subsection we make use of the 'square bracket' grading on $V$.\ For further background on this topic see \cites{DLM,M,MT,Z}.\ For a state
 $v\in V_k$ the zero mode of $v$ is, as usual, defined to be $o(v):=v(k-1)$.\ Then
 $o(v):V_n\rightarrow V_n$ for all $n$ and we set
 $$Z(v)= Z(v, \tau):=q^{-c/24}\sum_{n\geq0} Tr_{V_n} o(v)q^n,$$
 We extend $Z$ to a function on $V$ by linearity.\ We need these ideas only for \textit{holomorphic} VOAs, in which case for $v\in V_{[k]}$, Zhu's Theorem \cites{Z} says that $Z(v)$ is a modular form of level $1$ and weight $k$ (with nebentypus). 
 
 \medskip
 The character of $V$ is defined to be
$Z(\mathbf{1})$.\ We set
$$Z_V = Z_V(\mathbf{1}, \tau) = q^{-c/24}\sum_{n\geq0} \dim V_n q^n.$$

In the following we let
 $$J(\tau):=\frac{E_4(\tau)^3}{\Delta(\tau)}-744=q^{-1}+0+196884q+\hdots$$
 be the modular invariant of level $1$ with constant term equal to $0$.
 
 \begin{lem}\label{lem5} If $V$ is a holomorphic VOA of central charge $c$ then the following hold.\\
 (a) If $c=32$ then
 \begin{eqnarray*}
&&Z_V = \frac{E_4(\tau)}{\eta(\tau)^8}(J(\tau)+\dim V_1-248) \\
&&= q^{-4/3}(1+\dim V_1\cdot q+(248\dim V_1+139504)q^2+ ...)
\end{eqnarray*}
(b) If $c=40$ then
 \begin{eqnarray*}
&&Z_V = \frac{E_4(\tau)^2}{\eta(\tau)^{16}}(J(\tau)+\dim V_1-496)\\
&&= q^{-5/3}(1+\dim V_1\cdot q+(496\dim V_1 + 20620)q^2+...)
\end{eqnarray*}
 \end{lem}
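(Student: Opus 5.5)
The plan is to use Zhu's theorem applied to the vacuum: since $V$ is holomorphic, $Z_V$ is a modular function of weight $0$ for $\SLZ$, possibly with a character (multiplier). First I would pin down the multiplier. Because $c$ is divisible by $8$, the factor $q^{-c/24}$ combines with $\eta^{c}$, so the product $\eta(\tau)^c Z_V(\tau)$ has an integral $q$-expansion and transforms like a weight $c/2$ form with the multiplier $\nu^c$ of $\eta^c$ removed. For $c=32$ the weight is $16$. Now $\nu^{24}=1$ and $\nu^{8}$ is a cubic character, so we cannot simply multiply by $\eta^{-c}$ and land in level-one forms without care. The cleaner route is therefore to write $Z_V = E_4^{c/8}\eta^{-c}\cdot g$ with $g$ weight $0$.

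The key observation is that $E_4/\eta^8 = j^{1/3}$ carries exactly the cube-root multiplier that $q^{-c/24}$ forces. Indeed, $Z_V$ for $c=8$ is $E_4/\eta^8$, the character of $V_{E_8}$. More generally, holomorphic $V$ of central charge $c$ has $Z_V$ invariant under $S$ and transforming by $e^{-2\pi i c/24}$ under $T$. For $c=32$ the $T$-eigenvalue is $e^{-2\pi i\cdot 4/3}$, which matches that of $E_4/\eta^8$. So $g:=Z_V\,\eta^8/E_4$ is $\SLZ$-invariant in weight $0$.

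Next, $g$ is holomorphic on $\mathfrak H$. Zeros of $E_4$ occur only at the $\SLZ$-orbit of $\rho=e^{2\pi i/3}$, where $E_4$ has a simple zero, and $\eta$ never vanishes, so the only potential pole of $g$ is at $\rho$. I would rule this out by the multiplier argument: $g$ is genuinely invariant, so its order at $\rho$ is a multiple of $3$ in the local uniformizer; the expression $Z_V\eta^8/E_4$ has order at least $-1$ there, hence $\ge 0$. Alternatively, write $Z_V\eta^{32}$ as a weight-$16$ form for the cubic character, and note that such forms are $E_4\cdot M_{12}(\SLZ)$, since forms with this character must vanish at $\rho$. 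This second formulation is the one I would present, as it is cleanest: $\eta^{32}Z_V=E_4 F$ with $F\in M_{12}$, so $F=aE_4^3+b\Delta$, equivalently $g=a'(J+744)+b'$, a polynomial of degree $1$ in $J$ because $g$ has a pole of order exactly $1$ at infinity ($Z_V=q^{-4/3}(1+\cdots)$ and $E_4/\eta^8=q^{-1/3}(1+\cdots)$).

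Finally I would fix constants by matching $q$-expansions. Since $E_4/\eta^8=q^{-1/3}(1+248q+\cdots)$, write $g=J+C$. The coefficient of $q^{-1/3}$ in $Z_V$ equals $C+248$, which must be $\dim V_1$, giving $C=\dim V_1-248$. The $q^{2}$ coefficient $248\dim V_1+139504$ then follows from expanding $E_4\eta^{-8}$ to second order (routine: $E_4/\eta^8=q^{-1/3}(1+248q+4124q^2+\cdots)$, together with $196884$). Case (b) is identical with $E_4^2/\eta^{16}$, whose leading expansion is $q^{-2/3}(1+496q+\cdots)$; here $c=40$ gives $T$-eigenvalue $e^{-2\pi i\cdot 5/3}$, matching $E_4^2/\eta^{16}$. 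The only delicate point is the multiplier/zero-at-$\rho$ step, and I would handle it via the $E_4$-divisibility of forms with the cubic character, citing Zhu's theorem for the $S,T$ transformation of $Z_V$.
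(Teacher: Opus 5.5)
Your primary argument is correct, and it takes a different route from the paper.

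\textbf{The paper's route.} The paper multiplies by $\eta^{c}$. Since $\eta^{32}Z_V=1+\dim V_1\,q+\cdots$ has an integral $q$-expansion, its character is trivial on $T$. A character of $\SLZ$ is determined by its value on $T$, so the character is trivial on all of $\SLZ$. Hence $\eta^{32}Z_V\in M_{16}(\SLZ)=\CC E_4^4\oplus\CC E_4\Delta$, and dividing by $\eta^{32}=\eta^8\Delta$ gives $\frac{E_4}{\eta^8}(J+\alpha)$ immediately. For $c=40$ the same works with $M_{20}(\SLZ)=\CC E_4^5\oplus\CC E_4^2\Delta$.

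\textbf{Your route.} You divide $Z_V$ by $E_4/\eta^8=j^{1/3}$ (resp.\ $E_4^2/\eta^{16}$), which has the same $S$- and $T$-behaviour as $Z_V$. You then show the quotient $g$ has no pole at $\rho$: the order at $\rho$ of an $\SLZ$-invariant function is divisible by $3$, while the possible pole order is at most $1$ (resp.\ $2$). So $g=J+C$, and your coefficient matching is right.

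\textbf{Comparison.} Your route explains where the factor $j^{1/3}$ comes from. But it is really a local re-derivation of $M_{16}=E_4M_{12}$, which the paper gets for free from $\dim M_{16}=2$. In particular, your worry that one ``cannot simply multiply by $\eta^{-c}$ and land in level-one forms'' is unfounded.

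\textbf{Problems with the version you say you would present.} That formulation is justified incorrectly.
\begin{itemize}
\item $\eta^{32}Z_V$ does not carry a cubic character. As you yourself note, its expansion is integral, so its character is trivial.
\item The claim that weight-$16$ forms with a cubic character must vanish at $\rho$ is false. For example, $\eta^{32}=\eta^8\Delta$ has character $\nu^8$ and never vanishes on $\mathfrak H$.
\item The conclusion $\eta^{32}Z_V\in E_4M_{12}(\SLZ)$ is true. But it holds only because $\eta^{32}Z_V$ is an honest level-one form of weight $16$, and every such form vanishes at $\rho$ by the valence formula. That is exactly the paper's argument.
\end{itemize}

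\textbf{A slip to correct.} You first write $Z_V=E_4^{c/8}\eta^{-c}g$. With the factor $E_4^4\eta^{-32}$, the quotient equals $1+\alpha'/(J+744)$. This genuinely has a pole of order $3$ at $\rho$ when $\alpha'\neq0$, so your mod-$3$ argument would fail. The correct factor is $E_4^{(c-24)/8}\eta^{-(c-24)}$, which is what you actually use afterwards.
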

 \begin{proof}
 We have already explained that $Z_V(\tau)=q^{-c/24}(1+\hdots)$  is a modular function of weight $0$ and level $1$ (with nebentypus).\ Therefore if $c=32$ then
 $\eta^{32}Z_V(\tau)=1+\hdots$ is a holomorphic modular form of weight $16$  of level $1$.\ Consequently
 $\eta^{32}Z_V(\tau)=
 E_4^4(\tau)+\alpha' E_4(\tau)\Delta(\tau)$
 for some scalar $\alpha'$ and therefore
 $$Z_V(\tau)= \frac{E_4(\tau)}{\eta^8(\tau)}\left( J(\tau)+\alpha \right) =q^{-1/3}(1+248q+...) (q^{-1}+\alpha+...)$$ 
 for some other scalar $\alpha$.\ Comparing constant terms then shows that $\alpha=\dim V_1-248$ and this completes the proof of part (a). 

 \medskip
 The proof of part (b) is completely parallel upon noting that when $c=40$, $\eta^{40}Z_V=1+\hdots$ is a holomorphic modular form of level $1$ and weight $20$ and therefore equal to $E_4^5+\alpha'E_4^2\Delta$.\ The Lemma follows.
\end{proof}

Note that it is possible to extend the notion of VOA characters to the setting of Jacobi forms, see Section \ref{secjactest}.

\subsection{The supplemented root system}\label{SSsupp}

We have introduced supplemented root systems in (\ref{redRSL}) and (\ref{redRSVL}).\ Here we provide some more details which will be utilized in the next Subsection. Further details may be found in \cites{M}.

\medskip
Recall that a strongly regular, simple VOA $V$ is such that the Lie algebra $V_1=A\oplus S$ is reductive with abelian radical $A$ and Levi factor $S$ as in (\ref{redLA}).\ Moreover the subalgebra $\langle V_1\rangle$ of $V$ generated by $V_1$ satisfies
$$\langle V_1\rangle = M_f\otimes L_{\widehat{\mathfrak{g}}_1}(k_1, 0)\otimes\hdots\otimes L_{\widehat{\mathfrak{g}}_1}(k_1, 0) $$
where $M_f$ is the Heisenberg VOA of rank $f$
and $(M_f)_1=A$ generates $M_f$.

\medskip
 $V$ carries (\cites{Li}) a (unique) invariant bilinear form $\langle\ , \ \rangle$ normalized so that $\langle\mathbf{1}, \mathbf{1}\rangle=-1$.\ It is nondegenerate, and we call it the Li bilinear form.\ The Virasoro vector of $M_f$ is 
$$\omega_A:=1/2\sum_{i=1}^f h^i(-1)^2\mathbf{1}$$
where $\{h^i\}$ is an orthonormal basis of $A$ with respect to $\langle\ , \ \rangle$.

\medskip
Let $\mathcal{P}_A$ be the poset consisting of the subVOAs of $V$ having Virasoro vector $\omega_A$, ordered by inclusion.\ \ By Theorem 1 of \cites{M}
$\mathcal{P}_A$ has a unique maximal element, call it $M$, and $M$ enjoys the following properties:
\begin{enumerate}
\item[(a)] $M = C(C(M_f))$ (double commutant). 
\item[(b)] $M\cong V_L$ for some positive-definite even lattice $L$ with no roots.
\end{enumerate}

Since $A$ and $S$ are orthogonal then $M_f$ and the affine algebra 
$\langle S\rangle =
L_{\hat{\mathfrak{g}}_1}(k_1, 0)\otimes \hdots
\otimes L_{\hat{\mathfrak{g}}_n}(k_n, 0) $ commute.\ In other words  $\langle S\rangle\subseteq C(H)$ and therefore
$M$ and $\langle S\rangle$ commute by property (a) above.\ Thus $V$ has a rational subVOA isomorphic to
$$V_L  \otimes L_{\hat{\mathfrak{g}}_1}(k_1, 0)\otimes \hdots
\otimes L_{\hat{\mathfrak{g}}_n}(k_n, 0).$$
It corresponds to the supplemented root system 
$$L\Phi_{1, k_1}...\Phi_{n, k_n}.$$


\subsection{Some Virasoro vectors}\label{SSVirvecs}
The main result of this Subsection is Proposition \ref{propab}.\
We gather here the data associated with the current setting.\ We retain earlier notation and in particular
$V_1=A\oplus S$ and $S=\bigoplus_i\mathfrak{g}_{i, k_i}$  is the Levi factor of the Lie algebra $V_1$.\ In addition, introduce the following notation:
{\allowdisplaybreaks
\begin{eqnarray*}
&&(\ , \ )_i:=\mbox{invariant bilinear form on}\ \frakg_i\ \mbox{normalized so that a}\\
 && \mbox{long root $\alpha$ satisfies}\ (\alpha, \alpha)_i=2,\\
 &&\kappa_i:=\ \mbox{Killing form for}\ \frakg_i,\\
 &&\kappa:=\mbox{Killing form for}\ V_1,\\
&& \omega_i:= \mbox{Virasoro element of}\  L_{\widehat{\mathfrak{g}_i}}(k_i, 0)=\frac{1}{2(k_i+h_i^{\vee})}\sum_{j=1}^{d_i} u^j(-1)^2\mathbf{1} \\
&&\mbox{where $\{u^j\}$ is an orthonormal basis of $\frakg_i$ with respect to $(\ , \ )_i$},\\
&&\omega_A:=\mbox{Virasoro element of}\ \langle A\rangle =\mbox{Virasoro element of}\ V_L\\
&&=\tfrac{1}{2}\sum_j h^j(-1)^2\mathbf{1}\ \mbox{where $\{h^j\}$ is an orthonormal basis of $A$ with respect to}\ \langle \ , \ \rangle,\\
&&\omega_{aff}:=\mbox{Virasoro element of}\ S=\sum_i\omega_i,\\
&&\omega':=\omega-\omega_{aff},\\
&&\omega_T:=\mbox{Virasoro element of}\ \langle V_1\rangle =\omega_A+\omega_{aff}, \\
&&\omega'':=\omega-\omega_T, \\
&&c_{aff}:=\mbox{central charge of}\ S = \sum_i c_i,\\
&&c_A:=\mbox{central charge of}\ \langle A\rangle=
\dim A= \rk L, \\
&&c_T:= c_A+c_{aff}, \\
&&Y(\omega, z)=\sum_{n\in\mathbf{Z}} L(n)z^{-n-2}, \\
 &&Y(\omega', z)=\sum_{n\in\mathbf{Z}} L'(n)z^{-n-2}, \\
 &&Y(\omega_{aff}, z)=\sum_{n\in\mathbf{Z}} L_{aff}(n)z^{-n-2},\\
 &&Y(\omega'', z):=\sum_{n\in\mathbf{Z}} L''(n)z^{-n-2}, 
\end{eqnarray*}
}

Note that $\omega_T$ is a Virasoro element, but (despite the notation) it is not \textit{a priori} clear that $\omega''$ is also a Virasoro element although it turns out that it is when $c=32$ or $40$.

\medskip
From the commutator formula for $V$ we have for 
$u, v\in V_1$ that
\begin{align*}
   & [u(m), v(n)] = (u(0)v)(m+n)+m\delta_{m, -n}u(1)v,\\
   \mbox{i.e.,}\ \ &[u(m), v(n)] = [u, v](m+n)+m\delta_{m, -n}\langle u, v\rangle\mathbf{1}.
\end{align*}
Whereas  we have for $a, b\in\frakg_i$ that
\begin{align*}
&  [a_m, b_n] =[a, b]_{m+n}+k_i(a, b)_im\delta_{m, -n}
\end{align*}
A comparison then shows that
\begin{align}\label{compare1}
 &k_i.(a, b)_i=\langle a, b\rangle.   
\end{align}

\begin{lem}
 We have
 \begin{align*}
  & L(2)\omega_i=\frac{c_i}{2}\mathbf{1}.   
 \end{align*}
\end{lem}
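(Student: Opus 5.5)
We compute $L(2)\omega_i=\omega_{(3)}\omega_i$ directly from the explicit formula $\omega_i=\frac{1}{2(k_i+h_i^\vee)}\sum_j u^j(-1)^2\mathbf{1}$. Since $L(2)\omega_i$ has weight $0$ and $V_0=\CC\mathbf{1}$, it is a scalar multiple of $\mathbf{1}$; the only task is to identify the scalar.

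\textbf{Step 1: the commutator.} For $u\in V_1$ primary of weight one (true for $V_1$ in a strongly regular VOA, since $L(1)V_1\subseteq V_0$ and $L(1)V_1=0$ in the CFT-type setting), we have $[L(m),u(n)]=-n\,u(m+n)$. Using this twice, and $u(n)\mathbf{1}=0$ for $n\ge 0$,
$$L(2)u(-1)u(-1)\mathbf{1}=u(1)u(-1)\mathbf{1}+u(-1)L(2)u(-1)\mathbf{1}.$$
The second term vanishes, since $L(2)u(-1)\mathbf{1}=u(1)\mathbf{1}=0$. The first term equals $[u(1),u(-1)]\mathbf{1}=\langle u,u\rangle\mathbf{1}$ by the commutator formula (with $[u,u]=0$).

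\textbf{Step 2: summing over the basis.} By (\ref{compare1}), $\langle u^j,u^j\rangle=k_i(u^j,u^j)_i=k_i$ for the orthonormal basis $\{u^j\}$ of $\frakg_i$ with respect to $(\ ,\ )_i$. Hence
$$L(2)\omega_i=\frac{1}{2(k_i+h_i^\vee)}\sum_{j=1}^{d_i}k_i\,\mathbf{1}=\frac{d_ik_i}{2(k_i+h_i^\vee)}\mathbf{1}=\frac{c_i}{2}\mathbf{1},$$
using (\ref{cval}).

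\textbf{Main obstacle.} The only delicate point is justifying that each $u\in V_1$ is primary, i.e.\ $L(1)u=0$, so that the commutator $[L(m),u(n)]=-nu(m+n)$ holds with no extra term. Without this, the formula would pick up a term $\binom{m+1}{2}(L(1)u)(m+n)$. For CFT-type $V$ with $V_0=\CC\mathbf{1}$ we have $L(1)u\in V_0$. This term vanishes because the invariant form is nondegenerate and $\langle L(1)u,\mathbf{1}\rangle=\langle u,L(-1)\mathbf{1}\rangle=0$, which is the standard fact underlying the existence of the Li bilinear form.
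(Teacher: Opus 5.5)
Your proof is correct and is the same computation as the paper's. You commute $L(2)$ past $u^j(-1)^2$, reduce to $u^j(1)u^j=\langle u^j,u^j\rangle\mathbf{1}=k_i\mathbf{1}$ via (\ref{compare1}), and sum over the $d_i$ basis vectors to get $\frac{d_ik_i}{2(k_i+h_i^\vee)}\mathbf{1}=\frac{c_i}{2}\mathbf{1}$. In your ``main obstacle'' remark, the correction term in $[L(m),u(n)]$ is $\binom{m+1}{2}(L(1)u)(m+n-1)$, not $(L(1)u)(m+n)$, and since $L(1)u\in\CC\mathbf{1}$ it vanishes for $(m,n)=(2,-1)$ anyway, so primality is not actually the delicate point here.
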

\begin{proof} Let us compute
\begin{align*}
    & L(2)u^j(-1)^2\mathbf{1} = [L(2), u^j(-1)^2]\mathbf{1} \\
    &= [L(2), u^j(-1)]u^j(-1)\mathbf{1}+u^j(-1)[L(2), u^j(-1)]\mathbf{1}\\
    &= u^j(1)u^j(-1)\mathbf{1}+u^j(-1)u^j(1)\mathbf{1}\\
    &=u^j(1)u^j\\
    &=\langle u^j, u^j\rangle\mathbf{1}\\
    &=k_i\mathbf{1}.
\end{align*}
Here, in the penultimate equality we used the definition of the Li bilinear form $\langle \ , \ \rangle$.\ The last equality follows from (\ref{compare1}) because we have $(u^j, u^j)_i=1$.

\medskip
Now we obtain
\begin{align*}
    &L(2)\omega_i = \frac{1}{2(k_i+h_i^{\vee})}\sum_{j=1}^{d_i}
    L(2)u^j(-1)^2\mathbf{1}\\
    &=\frac{1}{2(k_i+h_i^{\vee})}d_ik_i\mathbf{1}
    =\frac{c_i}{2}\mathbf{1}.
\end{align*}
The Lemma is proved.
\end{proof}

\begin{cor}\label{cor7}
 $L(2)\omega_{aff}= \frac{1}{2}c_{aff}\mathbf{1}$. $\hfill\Box$
\end{cor}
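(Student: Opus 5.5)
The corollary follows from the preceding Lemma by linearity, since $\omega_{aff}=\sum_i\omega_i$ and $c_{aff}=\sum_i c_i$ by definition. Summing the identities $L(2)\omega_i=\frac{c_i}{2}\mathbf{1}$ over the simple components $\mathfrak{g}_i$ of the Levi factor $S$ gives
\begin{align*}
L(2)\omega_{aff}=\sum_{i=1}^n L(2)\omega_i=\sum_{i=1}^n\frac{c_i}{2}\mathbf{1}=\frac{1}{2}c_{aff}\mathbf{1}.
\end{align*}

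The only thing to check is that the Lemma really applies to each summand separately. Its proof uses only two facts about $\mathfrak{g}_i$ inside $V$. First, each $u^j\in\mathfrak{g}_i\subseteq V_1$ is a primary state of weight $1$, so $[L(2),u^j(-1)]=u^j(1)$. Second, the comparison \eqref{compare1} gives $\langle u^j,u^j\rangle=k_i$ for an orthonormal basis of $\frakg_i$ with respect to $(\ ,\ )_i$. Neither fact depends on the other components. Moreover, $L(2)$ here is the mode of the full Virasoro vector $\omega$ of $V$, so the same operator acts on every $\omega_i$, and linearity is legitimate.

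There is no real obstacle. One could also recover the statement independently by viewing $L(2)\omega_{aff}$ as $\langle\omega,\omega_{aff}\rangle$ up to the normalisation $\langle\mathbf{1},\mathbf{1}\rangle=-1$. Since $\omega_{aff}$ is the Virasoro vector of the subVOA $\langle S\rangle$ of central charge $c_{aff}$, that pairing again yields $\frac{1}{2}c_{aff}$. The linearity argument above is enough, though.
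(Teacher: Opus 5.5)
Your argument is correct and is exactly the paper's (implicit) proof: sum the preceding Lemma $L(2)\omega_i=\frac{c_i}{2}\mathbf{1}$ over the simple components, using $\omega_{aff}=\sum_i\omega_i$ and $c_{aff}=\sum_i c_i$. Your closing aside does not stand on its own, since identifying $-\langle\omega,\omega_{aff}\rangle$ with $\frac12 c_{aff}$ requires $\langle\omega-\omega_{aff},\omega_{aff}\rangle=0$, which is essentially the claim unless you first show $\omega-\omega_{aff}$ lies in the commutant of $\langle S\rangle$; relying on the linearity argument, as you do, is the right choice.
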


The vertex operators $Y(h^j, z)$ behave as if they have ``level'' $1$.\ That is, the same proof as the previous Lemma applies when $k_i$ is replaced by $1$ and $u^j$ by $h^j$.\ Hence 
\begin{align}\label{S2}
    L(2)\omega_A = \frac{c_A}{2}\mathbf{1}.
    \end{align}
This observation together with Corollary \ref{cor7} implies
\begin{cor}\label{cor8}
    $L(2)\omega_T=\frac{c_T}{2}\mathbf{1}$.
    $\hfill\Box$
\end{cor}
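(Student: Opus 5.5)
Since $\omega_T=\omega_A+\omega_{aff}$ and $L(2)$ is linear, the plan is to add Corollary~\ref{cor7} to equation~(\ref{S2}). That gives
\[
L(2)\omega_T=L(2)\omega_A+L(2)\omega_{aff}=\frac{c_A}{2}\mathbf{1}+\frac{c_{aff}}{2}\mathbf{1}=\frac{c_T}{2}\mathbf{1},
\]
using the definition $c_T=c_A+c_{aff}$.

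Equation~(\ref{S2}) is only asserted in the text as being proved ``the same way'' as the preceding Lemma, so I would check that step explicitly. I would write $\omega_A=\tfrac12\sum_{j=1}^{f}h^j(-1)^2\mathbf{1}$ with $\{h^j\}$ orthonormal for $\langle\ ,\ \rangle$. The commutator formula gives $[L(2),h^j(-1)]=h^j(1)$, since $h^j$ is a weight-one primary. Then
\[
L(2)h^j(-1)^2\mathbf{1}=h^j(1)h^j(-1)\mathbf{1}+h^j(-1)h^j(1)\mathbf{1}=\langle h^j,h^j\rangle\mathbf{1}=\mathbf{1}.
\]
Here $h^j(1)\mathbf{1}=0$, and the second-to-last equality is the definition of the Li form. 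Summing over $j$ and multiplying by $\tfrac12$ gives $\tfrac{f}{2}\mathbf{1}=\tfrac{c_A}{2}\mathbf{1}$, because $c_A=\dim A=f$.

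The only point that needs care is the same one implicit in the Lemma: the identity $[L(2),u(-1)]=u(1)$ for $u\in V_1$. This requires $V_1$ to consist of primary vectors, i.e.\ $L(1)V_1=0$. That holds because $V$ is strongly regular, hence of CFT type, and the Li form is nondegenerate, which forces $L(1)V_1=0$. Granting this, there is no real obstacle, and the corollary follows by summing the two contributions.
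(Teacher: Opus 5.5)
Your proposal is correct and is exactly the paper's argument: the corollary follows by adding Corollary~\ref{cor7} and (\ref{S2}) and using $c_T=c_A+c_{aff}$. The paper justifies (\ref{S2}) only by saying the Lemma's computation goes through with $h^j$ in place of $u^j$ and level $1$ in place of $k_i$; your explicit check carries out precisely that computation.

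A minor refinement to your closing remark: the identity $[L(2),u(-1)]=u(1)$ holds for every $u\in V_1$, even without $L(1)u=0$. The only extra term is $3(L(1)u)(0)$, and it vanishes because $L(1)u\in\CC\mathbf{1}$ and $\mathbf{1}(0)=0$. The hypothesis $L(1)V_1=0$ is really needed for the Li form to exist and to satisfy $u(1)v=\langle u,v\rangle\mathbf{1}$.
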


In what follows we use again the 'square bracket' formalism.\ For further details, see
\cites{DLM,M,MT,Z}.\ We now prove
\begin{prop}\label{propab}
Suppose that $V$ is holomorphic and $c=32$ or $40$.\ Then the following are equivalent:\\
(a) $\omega = \omega_T$, (b) $c=c_T$.
\end{prop}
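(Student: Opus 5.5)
(a)$\Rightarrow$(b) is immediate: if $\omega=\omega_T$ then $L(2)\omega=\frac{c}{2}\mathbf{1}$ and, by Corollary \ref{cor8}, $L(2)\omega_T=\frac{c_T}{2}\mathbf{1}$, so $c=c_T$. The substance is (b)$\Rightarrow$(a). Assuming $c=c_T$, set $\omega''=\omega-\omega_T\in V_2$. The goal is to show $\omega''=0$, and the plan is to show that its Li norm vanishes and then use nondegeneracy of $\langle\ ,\ \rangle$ together with quasi-primarity. We have $\langle \omega,\omega\rangle = \langle \mathbf{1}, L(2)\omega\rangle$ up to the normalization sign, and similarly for $\omega_T$. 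Since $\omega_T$ is the conformal vector of a subVOA whose modes $L_T(n)$ agree with $L(n)$ on $\langle V_1\rangle$ for $n\geq -1$ in the relevant sense, one gets $L(2)\omega_T=\frac{c_T}{2}\mathbf{1}$ (Corollary \ref{cor8}) and $L_T(2)\omega=L_T(2)\omega_T$ via skew-symmetry/invariance. From these, $\langle\omega'',\omega''\rangle$ is a multiple of $c-2c_T+c_T=c-c_T=0$. However, nondegeneracy alone does not give $\omega''=0$ from zero norm, since the form on $V_2$ is not definite in general. This is why the hypothesis $c\in\{32,40\}$ and holomorphy must enter.

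The approach I would take uses Zhu's theorem. First show $\omega''$ is a (square bracket) weight-$2$ highest weight vector for the Virasoro algebra of $V$, modulo a multiple of $\tilde\omega$. Concretely, $L(1)\omega''=0$ and $L(2)\omega''=\frac{c-c_T}{2}\mathbf{1}=0$. Passing to the square bracket formalism, the vector $v:=\omega''_{[\,]}$ (the appropriate correction of $\omega''$ in $V_{[2]}$, which is a primary state of weight $2$ by this computation) has $Z(v,\tau)$ a modular form of weight $2$ for $\SLZ$, possibly with a multiplier. The multiplier is the one of $\eta^{-c}$, so $\eta^{c}Z(v,\tau)$ is a holomorphic modular form of weight $2+c/2$, i.e.\ of weight $18$ for $c=32$ and weight $22$ for $c=40$. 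Its leading coefficient is $\mathrm{Tr}_{V_0}o(v)=0$, by the vanishing of $L(2)\omega''$. Hence it is a cusp form of weight $18$, resp.\ $22$, so it equals $\beta\,E_6\Delta$, resp.\ $\beta\,E_{10}\Delta$, for a single scalar $\beta$. Here $E_{10}=E_4E_6$.

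Next, compute the $q^1$-coefficient, $\mathrm{Tr}_{V_1}o(\omega'')$. The zero mode of $\omega_T$ on $V_1$ acts as $L(0)$ because $V_1\subseteq\langle V_1\rangle$ has $L_T(0)$-weight $1$. Hence $o(\omega'')=L(0)-L_T(0)$ vanishes on $V_1$, and the $q^1$-coefficient of $Z(\omega'')$ is $0$, up to the square-bracket correction terms, which are handled the same way. This forces $\beta=0$, so $Z(v,\tau)\equiv 0$. In particular $\mathrm{Tr}_{V_n}o(\omega'')=0$ for all $n$.

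Finally, conclude $\omega''=0$. Since $o(\omega'')=L(0)-L_T(0)$ on each $V_n$, and $L_T(0)$ is semisimple with nonnegative eigenvalues bounded by $n$ in a unitary-like sense, I would argue as follows. $\omega''$ is a Virasoro vector of central charge $c-c_T=0$ commuting with $\langle V_1\rangle$, so $L''(0)=L(0)-L_T(0)$ acts on $V$ with trace zero in each degree. It also has nonnegative eigenvalues, because $V$ decomposes into modules of the rational VOA $\langle V_1\rangle$ (via $U$ of \eqref{tensordecomp1}) whose conformal weights are at most their $L(0)$-weights, and the commutant weights are nonnegative. So $L''(0)=0$ on $V$. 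Then $\omega''=L''(-2)\mathbf{1}$ satisfies $\langle\omega'',\omega''\rangle$ and all its pairings with $V_2$ vanish, as they are computed by $L''(n)$ for $n\geq 0$, which kill everything. By nondegeneracy of the Li form, $\omega''=0$. The main obstacle I expect is this positivity step, namely justifying that $\omega''$ is genuinely a Virasoro vector with nonnegative $L''(0)$-spectrum. The modular-form step is routine once the correct weight-$2$ primary is identified.
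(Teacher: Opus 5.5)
Your modular-form step is correct and is in substance the paper's Step~3. You multiply by $\eta^{c}$ and kill the one-dimensional cusp space of weight $18$ (resp.\ $22$) using the vanishing $q^1$-coefficient. The paper multiplies by $\eta^{8}$ (resp.\ $\eta^{16}$) and lands in weight $6$ (resp.\ $10$), where there are no cusp forms at all. Two small points: since $L(n)\omega''=0$ for all $n\geq 1$, $\omega''$ is already primary for both $\{L(n)\}$ and $\{L[n]\}$, so no square-bracket correction is needed; and the vanishing of the $q^{-c/24}$-coefficient is just $\omega''(1)\mathbf{1}=0$, not a consequence of $L(2)\omega''=0$.

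The genuine gap is the final step. $Z(\omega'')=0$ only gives $\mathrm{Tr}_{V_n}L''(0)=0$ for each $n$. This forces $L''(0)=0$ only if the spectrum of $L''(0)$ has a fixed sign, and your argument for $L''(0)\geq 0$ does not hold up. The claim that the $U$-conformal weights occurring in $V$ are at most the corresponding $L(0)$-weights \emph{is} the statement $L''(0)\geq 0$, restated. Nonnegativity of the ``commutant weights'' is clear only for the vacuum multiplicity space, i.e.\ the commutant of $U$ in $V$, which is $\ZZ_{\geq 0}$-graded because it sits inside $V$. For a nontrivial irreducible $U$-module $M$, the multiplicity space of $M$ in $V$ is a module for a central-charge-$0$ coset about which nothing is known a priori: no unitarity, no positivity of the Li form, not even rationality. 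So nothing you have excludes negative $L''(0)$-weights there, and a first-moment argument cannot close the gap.

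The paper avoids the sign problem by going to second order. By Zhu recursion, $Z(\omega''[-1]\omega'')$ equals $\mathrm{Tr}_V L''(0)^2q^{L(0)-c/24}$ plus multiples of $\widehat{E}_2Z(L''[0]\omega'')$ and $\widehat{E}_4Z(L''[2]\omega'')$, and both correction terms vanish:
\begin{itemize}
\item $Z(L''[0]\omega'')=2Z(\omega'')=0$ by your modular step;
\item $L''(2)\omega''=\frac{c-c_T}{2}\mathbf{1}=0$, by a short skew-symmetry computation from $L(1)\omega_T=0$ and $L(2)\omega_T=\frac{c_T}{2}\mathbf{1}$.
\end{itemize}
The same kind of commutator computation also shows that $\omega''$ is a Virasoro vector of central charge $0$, the point you flagged as your main obstacle. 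Hence $\mathrm{Tr}_V L''(0)^2q^{L(0)-c/24}$ is a weight-$4$ form beginning at $q^{2-c/24}$.
\begin{itemize}
\item If $c=32$, multiplying by $\eta^8$ gives a weight-$8$ cusp form, hence $0$.
\item If $c=40$, multiplying by $\eta^{16}$ gives $\alpha\Delta$, and $\alpha=0$ because $\alpha\eta^8=\alpha q^{1/3}(1-8q+\cdots)$ would have coefficients of both signs.
\end{itemize}
This route needs only that the eigenvalues of $L''(0)$ are \emph{real}, which does follow from rationality of $V_L$ and the affine VOAs: the eigenvalues of $L_T(0)$ on $V$ are rational conformal weights plus integers. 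Then $\sum\lambda^2=0$ forces all eigenvalues to vanish. Semisimplicity of $L''(0)=L(0)-L_T(0)$, a difference of commuting semisimple operators, gives $L''(0)=0$, and so $\omega''=\tfrac12 L''(0)\omega''=0$.
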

\begin{proof}
(a)$\Rightarrow$ (b) is obvious so we only need to establish that (b)$\Rightarrow$(a).\ We assume (b) for the rest of the proof.

\medskip\noindent
\underline{Step 1}: $L(2)\omega''=0$.\\
Proof:\ Using Corollary \ref{cor8} we have
$$L(2)\omega''=L(2)(\omega-\omega_T)=\tfrac{c}{2}\mathbf{1}-\tfrac{c_T}{2}\mathbf{1}=0.$$

\medskip\noindent
\underline{Step 2}:\ $\omega''$ is a highest weight vector of weight $2$ for both sets of Virasoro operators 
$\{L(n)\}$ and $\{L[n]\}$.\\
Proof:\ We assert that it suffices to prove that 
$L(1)\omega''=0$.\ For if this holds then by Step 1
both $L(1)$ and $L(2)$ annihilate $\omega''$ in which case every $L(n)\ (n\geq 1)$ annihilates $\omega''$.\ It is then a standard consequence (\textit{loc.\ cit.}) that every
$L[n]\ (n\geq 1)$ also annihilates $\omega''$.\ So $\omega''$ is a highest weight vector in both cases
and since $\omega''\in V_2$ then Step 2 is proved. 

\medskip
We always have $L(1)\omega=0$, and since $V$ is strongly regular then $L(1)V_1=0$, in particular $L(1)u^j=0$.\ It then follows that $L(1)\omega_i=0$
so that also $L(1)\omega_{aff}=0$.\ Similarly we have $L(1)\omega_A=0$.\ Therefore
$L(1)\omega''=L(1)(\omega-\omega_A-\omega_{aff})=0$.\ This completes the proof of Step 2.

\medskip\noindent
\underline{Step 3}: $Z(\omega'')=0$.\\
Proof:\ By step 2, $\omega''$ is a state of square bracket weight $2$ and it then follows from \cites{Z} that $Z(\omega'')$ is a level $1$ modular form (with nebentypus) of weight $2$.\ 

\medskip
We claim that this form is holomorphic at infinity.\ Certainly $o(\omega'')$ annihilates the vacuum, so the coefficient of $q^{-c/24}$ vanishes.\ To say
that the coefficient of $q^{1-c/24}$ also vanishes means that $o(\omega'')$ annihilates $V_1$.\ 
To see this, note that $o(\omega'')=\omega''(1)=
L(0)-L_T(0)$.\ But both $L(0)$ and $L_T(0)$ act as $1$ on $V_1$, so that indeed $o(\omega'')$ annihilates $V_1$.

\medskip
It follows that 
$$Z(\omega'')= \alpha q^{2-c/24}+\hdots$$
for some scalar $\alpha$.\ If $c=32$ then
$\eta^8Z(\omega'')=\alpha q+\hdots$ is a cusp form of level $1$ and weight $6$.\ Since the only nonzero level $1$ cusp forms have weight $\geq 12$ this shows that $Z(\omega'')=0$ as needed.\ Similarly if $c=40$ then $\eta^{16}Z(\omega'')$ is a cusp form of weight $10$ and again $Z(\omega'')=0$.\ This completes the proof of Step 3.

\medskip\noindent
\underline{Step 4}: $\omega''$ is a Virasoro vector of central charge $0$.\\
Proof:\  Use the commutator formula for $V$ to see that
\begin{align*}
&[L(m), L_T(n)] = [\omega(m+1), \omega_T(n+1)]\\
&=\sum_{i\geq0} \binom{m+1}{ i}(\omega(i)\omega_T)(m+n+2-i)\\
&=(L(-1)\omega_T)(m+n+2)+2(m+1)\omega_T(m+n+1)\\
&\qquad\qquad\qquad\qquad\qquad\qquad\qquad +\binom{m+1}{3}(L(2)\omega_T)(m+n-1).
\end{align*}
Here we used $L(1)\omega_T=0$, which follows from
Step 2.\ Therefore, using Corollary \ref{cor8},
\begin{align}\label{brackform}
  &[L(m), L_{T}(n)] =  (m-n)\omega_{T}(m+n+1)+
  \binom{m+1}{3}\delta_{m, -n}\frac{c_{T}}{2}Id_V\notag \\
  &=  (m-n)L_{T}(m+n)+
  \frac{m^3-m}{12}\delta_{m, -n}c_{T}Id_V.
\end{align}

We use (\ref{brackform}) in the next calculation:\begin{align*}
&  [L(m)-L_{T}(m), L(n)-L_{T}(n)] \\
&= \left\{(m-n)L(m+n)+\frac{m^3-m}{12}c\delta_{m, -n}Id_V\right\}\\
&\qquad\qquad+\left\{(m-n)L_{T}(m+n)+\frac{m^3-m}{12}c_{T}\delta_{m, -n}Id_V\right\}\\
&-\left\{(m-n)L_{T}(m+n)+\frac{m^3-m}{12}c_{T}\delta_{m, -n}Id_V\right\}\\
&\qquad\qquad+\left\{(n-m)L_{T}(m+n)+\frac{n^3-n}{12}c_{T}\delta_{m, -n}Id_V\right\}\\
&=(m-n)(L(m+n)-L_{T}(m+n))+\frac{m^3-m}{12}\delta_{m, -n}(c-c_{T}) Id_V.
\end{align*}

This says precisely that $\omega''=\omega-\omega_T$
is a Virasoro vector of central charge $c-c_T=0$,
and the proof of Step 4 is complete.

\medskip\noindent
\underline{Step 5}: All eigenvalues of $L''(0)$
are real.\\
Proof: Since $L''(0)=L(0)-L_A(0)-L_{aff}(0)$ it suffices to prove that all eigenvalues of $L_A(0)$ and $L_{aff}(0)$ are real.

\medskip
Concerning $L_{aff}(0)$, the needed statement is proved in \cites{DM1} as Step 5 in Proposition 4.1 of
\textit{loc.\ cit.}\ As for $L_A(0)$, the same method of proof as the previous case may also be applied to $L_A(0)$.\ This is because $L_A(0)$ is the zero mode of $\omega_A$, which is the Virasoro vector of the subVOA $V_L\subseteq V$.\ Now $V_L$ is rational
and its finitely many irreducible modules have real (in fact non-negative rational) conformal weights and this is all that is needed to apply the argument of \textit{loc.\ cit.} Now Step 5 follows.

\medskip\noindent
\underline{Step 6}: $Tr_V L''(0)^2q^{L(0)-c/24}=0$.\\ 
Proof:\ Apply Zhu recursion \cites{MT,Z} to see that
\begin{align*}
 & Z(L''[-2]\omega'')=  Z(\omega''[-1]\omega'')=
 Tr_V L''(0)^2q^{L(0)-c/24}+\sum_{m\geq1} \frac{1}{m}
 \widehat{E}_{m+1}Z(L''[m-1]\omega'')\\
 &=Tr_V L''(0)^2q^{L(0)-c/24}+ \widehat{E}_{2}Z(L''[0]\omega'')+\frac{1}{3}\widehat{E}_{4}Z(L''[2]\omega'').
\end{align*}
 After Step 4
 we have $L''(2)\omega''=0$ and by Step 3 we get
 $Z(L''(0)\omega'')=2Z(\omega'')=0$.\ Therefore,
 \begin{align*}
  & Tr_V L''(0)^2q^{L(0)-c/24}=Z(L''[-2]\omega'')   
 \end{align*}
 is a modular form of level $1$ (with nebentypus) and weight $4$.\
 It is holomorphic at infinity because $L''(0)$ annihilates 
 both $\mathbf{1}$ and $V_1$, as we saw in the proof of Step 3.\ So the $q$-expansion begins $\alpha q^{2-c/24}+...$ for a scalar $\alpha$.\ 
 
 \medskip
 Thus, arguing as before, if $c=32$ then 
 $\eta^8Tr_V L''(0)^2q^{L(0)-c/24}$
 is a cusp form of level $1$ and weight $8$ which forces $Tr_V L''(0)^2q^{L(0)-c/24}=0$ as required.

 \medskip
 Now suppose that $c=40$.\ Then
 $\eta^{16}Tr_V L''(0)^2 q^{L(0)-c/24}=\alpha q+\hdots$
 is a cusp form of level $1$ and weight $12$.\ It is thus a scalar multiple of $\eta^{24}$, so we see that
 $$Tr_V L''(0)^2q^{L(0)-c/24}=\alpha \eta^8.$$
 By Step 5 $Tr_V L''(0)^2 q^{L(0)-c/24}$ has only nonnegative Fourier coefficients.\ But this is only possible if $Tr_V L''(0)q^{L(0)-c/24}=0$, because otherwise from the last display we see that $\alpha\neq 0$ is real and $\alpha\eta^8(\tau)=\alpha q^{1/3}(1 - 8q + 20q^2+...)$ has both positive and negative real Fourier coefficients.\ This completes the proof of Step 6.

\medskip
Finally, the combination of Steps 5 and 6 shows that 
we must have $L''(0)=0$.\ But by Step 4, $L''(0)=2\omega''$. Consequently $\omega''=0$, i.e.,
$\omega=\omega_T$ and part (a) of Proposition \ref{propab} holds.\ This completes the proof of the Proposition.
\end{proof}

\subsection{Balanced VOAs}\label{SSbalancedVOA}
We continue with previous notation and in this Subsection we take $V$ to be a strongly regular simple VOA of central charge $c$.\ From Subsection \ref{SSsupp}, $V$ has a canonical supplemented root system attached to it.\ Indeed, we explained there that
$V$ has a \textit{canonical subVOA} isomorphic to
$$V_L  \otimes L_{\hat{\mathfrak{g}}_1}(k_1, 0)\otimes \hdots
\otimes L_{\hat{\mathfrak{g}}_n}(k_n, 0).$$
and the supplemented root system is 
$$L\Phi_{1, k_1}\hdots \Phi_{n, k_n}.$$
Alternatively, the extended root system is
$$O^f \Phi_{1, k_1}\hdots \Phi_{n, k_n}$$
where $\rk L=f$.\ Recall that this means
the radical of $V_1$ satisfies $A=\CC^f$.
\begin{dfn} We say that $V$ is \textit{balanced}
 if its associated root system lies in
 $B(c, f)$ (notation as in Subsection \ref{SSBRS}).
\end{dfn}

There are several ways to state the balanced condition numerically (cf.\ Definition \ref{dfnbalRS}).\ It means that for each index $i$ we have
$$(c-\dim A)\frac{h_i^{\vee}}{k_i}= \dim V_1-c,$$
or equivalently,
\begin{eqnarray}\label{balance3}
   (c-\dim A)\left(1+\frac{h_i^{\vee}}{k_i}\right) = \dim S,
\end{eqnarray}
where $S$ is the Levi factor of $V_1$.\ In case $V_1=A$ is abelian it means that
$c=\dim A$.

\begin{rmk}
It is clear that given any balanced root system  $O^f \Phi_{1, k_1}\hdots \Phi_{n, k_n}$, there is a strongly regular, simple,  balanced VOA $V$ having the specified root system.\ Indeed, we may take the tensor product
$$V=V_L  \otimes L_{\hat{\mathfrak{g}}_1}(k_1, 0)\otimes \hdots
\otimes L_{\hat{\mathfrak{g}}_n}(k_n, 0)$$
where, as usual, $L$ is an even lattice with no roots.\ More generally we could take
\begin{eqnarray}\label{exttp}
V=V_L  \otimes L_{\hat{\mathfrak{g}}_1}(k_1, 0)\otimes \hdots
\otimes L_{\hat{\mathfrak{g}}_n}(k_n, 0)\otimes W
\end{eqnarray}
where $W$ is any strongly regular, simple VOA having $W_1=0$.
\end{rmk}

\subsubsection{The cases $c=32$ and $40$}\label{SSbal32}

From now on our interests will focus on \textit{holomorphic} VOAs $V$ having a specified balanced root system.\ Then we are obliged to limit ourselves to the cases $c=32, 40$ because of Proposition \ref{propab}.\ This result
 facilitates the development of a Schellekens-type list for balanced, holomorphic theories with $c=32$ or $40$ as follows.
\begin{thm}\label{thmbalab}
Suppose that $V$ is a balanced holomorphic VOA with $c=32$ or $40$ and $V_1\neq0$.\ Then 
$\omega = \omega_T$
\end{thm}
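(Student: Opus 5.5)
The plan is to reduce Theorem \ref{thmbalab} to Proposition \ref{propab}. Since $V$ is holomorphic with $c=32$ or $40$, that Proposition says $\omega=\omega_T$ is equivalent to $c=c_T$. So it suffices to show that the balance condition forces
\[
c_T=c_A+c_{aff}=f+\sum_i c_i
\]
to equal the central charge $c$ of $V$. This is a short numerical computation based on (\ref{cval}) and the numerical form (\ref{balance3}) of balance.

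I would split into two cases, according to whether the Levi factor $S$ of $V_1$ vanishes.

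\emph{Case $S=0$.} Then $V_1=A\neq0$, and balance means exactly $c=\dim A=f$. Since there are no affine components, $c_T=c_A=\rk L=f=c$.

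\emph{Case $S\neq0$.} By (\ref{cval}), each affine component has central charge
\[
c_i=\frac{d_ik_i}{k_i+h_i^{\vee}}=\frac{d_i}{1+h_i^{\vee}/k_i}.
\]
Condition (\ref{balance3}) states that $(c-f)(1+h_i^{\vee}/k_i)=\dim S$ for every index $i$. Because $\dim S>0$, this gives $c-f\neq0$. Hence $1+h_i^{\vee}/k_i=\dim S/(c-f)$ is independent of $i$, and $c_i=d_i(c-f)/\dim S$. Summing over $i$ and using $\sum_i d_i=\dim S$ yields $c_{aff}=c-f$. Therefore $c_T=f+(c-f)=c$.

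In either case (b) of Proposition \ref{propab} holds, and so $\omega=\omega_T$.

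I expect no real obstacle here; the substance lies in Proposition \ref{propab}, which is already established. The only points needing care are bookkeeping ones. First, the $c$ appearing in the balance condition for $V$ is the central charge of $V$ itself, rather than the formal quantity $f+\sum_i c_i$ of Definition \ref{dfnbalRS}; identifying the two is precisely the content of the computation above. Second, $c-f\neq0$ must be checked before dividing, and this is guaranteed by $S\neq0$ through (\ref{balance3}). Third, $c_A=\dim A=f$, as recorded in Subsection \ref{SSsupp}.
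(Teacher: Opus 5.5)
Your proposal is correct and follows the paper's proof: reduce via Proposition \ref{propab} to checking $c=c_T$, then use (\ref{cval}) and (\ref{balance3}) to get $c_{aff}=c-\dim A$. Your separate treatment of the case $S=0$, and your check that $c-f\neq0$ before dividing, make explicit two points that the paper's one-line computation (which divides by $\dim S$) leaves implicit.
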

\begin{proof}
    After Proposition \ref{propab} it suffices to prove that $c=c_T$.\ We have 
\begin{align*}
c_T&=c_A+c_{aff}\\
& = \dim A+\sum_i \tfrac{k_id_i}{k_i+h_i^{\vee}}\\
&= \dim A+\sum_i \tfrac{d_i}{1+h_i^{\vee}/k_i}\\
&= \dim A+\tfrac{(c-\dim A)}{\dim S}\sum_i d_i\\
&= c.
\end{align*}
where we used (\ref{balance3}) to get the penultimate equality. The Theorem is proved. 
\end{proof}

\begin{rmk}
  The point of Theorem \ref{thmbalab}  is that it ensures that $V$ contains no subalgebras that look like (\ref{exttp})
  unless $W=\CC$ is trivial.\ Put another way, we know from Subsection \ref{SSsupp} that $V$ contains  a balanced strongly regular subalgebra
  \begin{eqnarray}\label{Udef}
  U=V_L  \otimes L_{\hat{\mathfrak{g}}_1}(k_1, 0)\otimes \hdots
\otimes L_{\hat{\mathfrak{g}}_n}(k_n, 0)
\end{eqnarray}
and Theorem \ref{thmbalab} ensures that
$V$ decomposes as a \textit{finite} graded direct sum of irreducible $U$-modules.
\end{rmk}

\subsubsection{The abelian case}
Here we deal with the case that $V$ is balanced and holomorphic and $V_1=A$ is abelian.\ In this case we do not need Theorem \ref{thmbalab} and there is no restriction on $c$, for we have

\begin{thm}\label{thmL}
Suppose that $V$ is a balanced, holomorphic VOA with $V_1=A\neq0$.\ Then for any $c$, $V$ is isomorphic to a lattice theory $V_L$ where $L$ is an even unimodular lattice of rank $c$ with no roots.
\end{thm}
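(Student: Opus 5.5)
In the abelian case the balance condition of Definition \ref{dfnbalRS} reduces to $c=\dim A=f$. My plan is to use this to show that $V$ contains the lattice VOA $V_L$ with full central charge, and then to use holomorphy to conclude that $V=V_L$.

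\emph{Step 1: $V$ contains $V_L$ with the full Virasoro vector.} By Subsection \ref{SSsupp}, the subVOA $\langle V_1\rangle=M_f$ is the rank $f$ Heisenberg VOA. It sits inside the maximal element $M\cong V_L$ of the poset $\mathcal{P}_A$, where $L$ is a positive-definite even lattice with no roots and $\rk L=f=c$. So $V$ contains $U:=V_L$, whose Virasoro vector is $\omega_A$ with central charge $c_A=c$.

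I then want $\omega=\omega_A$. Here I would not invoke Proposition \ref{propab}, since it is stated only for $c=32,40$. Instead I would use a unitarity-type argument that works for any $c$. Set $\omega'':=\omega-\omega_A$. By (\ref{S2}), $L(2)\omega''=0$, and $L(1)\omega''=0$ as in Step 2 of Proposition \ref{propab}. The commutator computation of Step 4 there applies verbatim, so $\omega''$ is a Virasoro vector of central charge $c-c_A=0$ commuting with $V_L$. Its commutant $C(V_L)$ therefore has central charge $0$. Since $V$ is strongly regular, $C(V_L)$ is itself a CFT-type subVOA, and I would argue that $V$ is positive-energy with $L_A(0)$ having nonnegative real spectrum, as in Step 5 of the proof. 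A CFT-type VOA of central charge $0$ of this kind is $\CC\mathbf{1}$, which gives $\omega''=0$.

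This step is the main obstacle. If the general argument is awkward, I would cite the standard fact (Dong--Mason, as in \cite{DM1}) that a strongly regular VOA containing a Heisenberg subVOA of rank equal to $c$ is a lattice VOA. More precisely: $C(M_f)$ has central charge $0$, hence is trivial, so $V$ decomposes as $\bigoplus_\lambda M_f\otimes e^\lambda$, which is a lattice VOA $V_{L'}$ with $M\subseteq V_{L'}$.

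\emph{Step 2: identify $V$.} Given $\omega=\omega_A$, $V$ is a finite direct sum of irreducible $V_L$-modules $V_{\lambda+L}$ with $\lambda\in L^\circ/L$, and the decomposition is multiplicity free because $V_L$ equals its own double commutant. The standard simple-current argument shows that the set $L'$ of $\lambda$ that occur is an even lattice with $L\subseteq L'\subseteq L^\circ$ and $V\cong V_{L'}$. Maximality of $M$ in $\mathcal{P}_A$ then forces $L'=L$.

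Holomorphy of $V_L$ is equivalent to unimodularity of $L$, since the irreducible $V_L$-modules are indexed by $L^\circ/L$. Finally, $L$ has no roots because $(V_L)_1=A$ is abelian: a root would produce root vectors $e^{\pm\alpha}$ in $V_1$. Hence $V\cong V_L$ with $L$ even unimodular of rank $c$ and no roots.
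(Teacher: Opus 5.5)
\textbf{There is a genuine gap in Step 1, and it is where the whole content of the theorem lies.} You assert that a CFT-type VOA of central charge $0$ ``of this kind'' is $\CC\mathbf{1}$. That is false, and your argument never uses holomorphy, which is the hypothesis that makes the theorem true.

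For a counterexample, let $L$ be any positive-definite even lattice without roots, and let $W=L(-\tfrac{22}{5},0)\otimes L(\tfrac{7}{10},0)^{\otimes 2}\otimes L(\tfrac12,0)^{\otimes 6}$. This is a strongly rational product of Virasoro minimal models with $W_1=0$ and central charge $-\tfrac{22}{5}+\tfrac{7}{5}+3=0$. Then $V=V_L\otimes W$ has the following properties:
\begin{itemize}
\item it is strongly regular of CFT type, and $V_1=A$ is abelian with $\dim A=\rk L=c$, so the balance condition holds;
\item your Steps go through and produce $\omega''=\omega_W$, a Virasoro vector of central charge $0$;
\item $L_A(0)$ has nonnegative spectrum.
\end{itemize}
Yet $C(V_L)=W\neq\CC\mathbf{1}$, $\omega\neq\omega_A$, and $V$ is not a lattice VOA; it even has an irreducible module of conformal weight $-\tfrac15$.

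The same example refutes your fallback as stated, namely ``a strongly regular VOA containing a Heisenberg subVOA of rank equal to $c$ is a lattice VOA''. It also refutes its gloss ``$C(M_f)$ has central charge $0$, hence is trivial'', which is the same unproved assertion. Nor can you borrow Steps 5--6 of Proposition \ref{propab}. Those steps do use holomorphy, through modularity of trace functions, but the cusp-form vanishing argument there works only for $c=32,40$, not for arbitrary $c$.

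\textbf{What is missing is the Dong--Mason effective central charge theorem} \cite{DM2} (not \cite{DM1}). It says that if a strongly regular $V$ has $V_1$ of Lie rank equal to the effective central charge $\tilde c$, then $V$ is a lattice VOA. Holomorphy gives $\tilde c=c$, since $V$ is its only irreducible module. Balance gives Lie rank $\dim A=c$. This is exactly the paper's proof, and it yields $V\cong V_L$ directly: $L$ is unimodular by holomorphy and rootless because $V_1$ is abelian.

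Your Step 2 is therefore unnecessary. Even granting $\omega=\omega_A$, you would not need the simple-current decomposition: $V$ itself would then lie in $\mathcal P_A$, so $V=M\cong V_L$ by the maximality statement in Subsection \ref{SSsupp}.
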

\begin{proof}
We know that the balanced condition in this case means that $\dim A=c$.\ Because $V$ is holomorphic $c$ is equal to the effective central charge $\tilde{c}$ and therefore $V$ satisfies the conditions needed to apply \cites{DM2}. This implies that $V$ is a lattice theory $V_L$.\ $L$ is unimodular because $V$ is holomorphic and $L$ has no roots because $S=0$.
\end{proof}

 \begin{rmk} Regarding unimodular lattices  with no roots, see \cites{KV} for further commentary.\ If $c=32$ there are just 5 isomorphism classes (\textit{loc.\ cit}).
\end{rmk}

\subsubsection{Holomorphic VOAs of pure power type}\label{SSPP} We begin with

\begin{dfn}\label{defpptype}
 A holomorphic VOA $V$ (or $V_1$) is of
 \textit{pure power type} if $V_1\neq0$ and if the extended root system for $V_1$
 is of the form $O^f\Phi_k^n\ (n\geq0)$ for some simple root system $\Phi$ at level $k$.
\end{dfn}

The motivation for introducing this Definition arises from commentary in \cite{OK1}*{Remark 8} to the effect that over $97\%$ of the mass of unimodular lattices of rank $32$ comes from lattices having a root system of type $0^{32-n}A_{1, 1}^n$ for some $n$.\ Furthermore 
pure power types predominate in the statement of Theorem \ref{thmpurepower}.\ Thus it is worthwhile to focus on this special type of root system, which is almost (but not necessarily) balanced.\ In fact we have

\begin{lem}\label{corabc}
If $V$ is a holomorphic VOA of pure power type and  $c=32$ or $40$ then the following are equivalent:\ (a) $\omega=\omega_T$, (b) $c=c_T$, (c) $V$ is balanced.
\end{lem}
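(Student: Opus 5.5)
The equivalence (a)$\Leftrightarrow$(b) is exactly Proposition \ref{propab}, which applies since $V$ is holomorphic with $c=32$ or $40$. The implication (c)$\Rightarrow$(b) is the computation in the proof of Theorem \ref{thmbalab}. That computation uses only the balance condition (\ref{balance3}) and the formula $c_i=k_id_i/(k_i+h_i^{\vee})$, and it yields $c_T=c$. So the only new content is (b)$\Rightarrow$(c), and this is where the pure power hypothesis enters.

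For (b)$\Rightarrow$(c), write the extended root system as $O^f\Phi_k^n$ with $V_1\neq 0$.

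\emph{Case $n=0$.} Here $V_1=A$ and $c_T=c_A=f$. Hypothesis (b) gives $f=c$, which is precisely the balance condition in the abelian case. Note that $f=\dim A\neq 0$, since $V_1\neq0$.

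\emph{Case $n\geq 1$.} All simple components share the same invariants $d_i=d_\Phi$, $h_i^\vee=h^\vee$ and $k_i=k$. Hence $\dim S=nd_\Phi$ and
$$c_{aff}=\frac{n\,k\,d_\Phi}{k+h^{\vee}}=\frac{\dim S}{1+h^{\vee}/k}.$$
Condition (b) says $c=f+c_{aff}$, that is, $c-\dim A=c_{aff}$. Substituting the display gives $(c-\dim A)(1+h^\vee/k)=\dim S$. This is (\ref{balance3}) for every index $i$ simultaneously, because all indices have the same $h^\vee/k$. So $V$ is balanced.

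There is no real obstacle here. The one point needing care is the observation that, for a pure power, the single equation $c=c_T$ already forces the balance equation for each component. This is exactly what fails for mixed root systems, where $c=c_T$ imposes only one averaged relation among the different ratios $h_i^\vee/k_i$. I would also check that the degenerate case $n=0$ fits the definition of balanced as stated, namely $c=d=f\neq0$.
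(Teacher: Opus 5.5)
Your proof is correct and follows the paper's argument. You get (a)$\Leftrightarrow$(b) from Proposition \ref{propab} and the return from (c) from Theorem \ref{thmbalab}. Then (b)$\Rightarrow$(c) is the same one-line computation $(c-\dim A)(1+h^{\vee}/k)=n\dim\frakg=\dim S$. Your separate handling of the degenerate case $n=0$ is a small piece of extra care that the paper's proof leaves implicit.
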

\begin{proof} 
    (a)$\Leftrightarrow$(b) is Proposition \ref{propab} and (c)$\Rightarrow$(a) is Theorem
    \ref{thmbalab}.\ We prove that (b)
    $\Rightarrow$(c).\ Indeed the root system for $V_1$ is $O^f\Phi^n_k$ as above.\ If $\mathfrak{g}$ is the simple Lie algebra with root system $\Phi$ then
$$c= c_T= f+c_{aff}=\dim A+\tfrac{nk\dim\frakg}{k+h^{\vee}}$$
  and therefore
  $$(c-\dim A)(1+\tfrac{h^{\vee}}{k})=n\dim \frakg = \dim S.$$
  Thus (\ref{balance3}) is satisfied and $V$ is balanced.\ This completes the proof of the Lemma.
\end{proof}

\begin{rmk}
    Suppose that $V$ is a holomorphic lattice theory of pure power type with $c=32$ or $40$.\ By the very construction of $V$ we know that (a) and (b) hold, so $V$ is balanced by Lemma \ref{corabc}.\ Since the pure power and balanced properties are closely related one might then wonder if, in the situation of Lemma \ref{corabc}, they are equivalent.\ The answer is "almost, but not quite".\ The root systems for balanced holomorphic lattice theories with $c=32$ may be extracted from \cites{AMM32all} or \cites{OK2}.\ There are $82$ cases and among these just four, namely $D_{16, 1}E_{8,1}^2$, $O^4A_{5,1}^4D_{4,1}^2$, $O^8A_{5,1}^4D_{4,1}$, and $O^{23}A_{5, 1}D_{4, 1}$ are not of pure power type.
\end{rmk}


\section{DGM-orbifolds}\label{SDGM}
Let $L$ be an even unimodular lattice of rank $c$ and let $\theta$ be the inverting involution of $L$, i.e. $\theta(v)=-v\ (v\in L)$.\ Beyond $V_L$ itself we have the DGM orbifold
$$V:= V_L^+\oplus (V_L^T)^+$$
where $(V_L^T)$ is the unique simple $\theta$-twisted module for $V$ (\cites{DLM}).\ As usual, $+$ denotes $\theta$-fixed points.\ $V$ is also a holomorphic VOA of central charge $c$ (\cites{DGM}).

\medskip
Given a  root system $O^f\Phi_{1, k_1}...\Phi_{n, k_n}$
we  want to be able to decide whether it is the root system for a balanced holomorphic DGM orbifold.\ The main results are Theorems \ref{thmDGM1} and \ref{balancedDGM}; they allow us to do this.\ In particular the second of these Theorems says that if $V$ is balanced then $L$ is necessarily complete, i.e., the Lie algebra $(V_L)_1$ is semisimple.\  All of this holds for \textit{any} $c$, and to make it effective one needs the root systems for all complete, unimodular lattices of rank $c$.\ For $c=32$ these were furnished by \cites{Ke} and later also by \cites{OK1,OK2}.\ But for $c = 40$ this information seems to be unavailable.

\medskip
  We begin with the decomposition
  \begin{eqnarray} \label{VL1decomp}
  (V_L)_1 = A\oplus \frakg_{1, 1}\oplus\hdots \oplus\frakg_{n, 1}.
  \end{eqnarray}
  Here, as usual, $A$ is the radical and $\frakg_{i}$ is a simple Lie algebra of type $ADE$ with root system $\Psi_i$.

  \medskip
  The first observation is that by construction, the twisted part $V_L^{T+}$ has conformal weight at least $2$ and hence it does not contribute to $V_1$.\ Second, 
  $\theta$ negates all states in $A$ so it does not contribute to $V_1$ either.\ Since $\theta$ leaves each $\frakg_{i, 1}$ invariant we have
  $$ (V_L)_1 = (L_{\widehat{g}_{1}}(1, 0)^+)_1\oplus\hdots 
  \oplus (L_{\widehat{g}_{n}}(1, 0)^+)_1=\frakg_{1, 1}^+\oplus ...\oplus \frakg_{n, 1}^+.$$

  The involutorial fixed-point Lie algebras $L_{\widehat{g}_{\ell_i}}(1, 0)^+$ are well-known \cite{Kac}, \cite{Helg}.
  
  \medskip
  First, if $\frakg_{i}=A_1$ then
  $(\frakg_{i}^+)_1=\CC$, indeed it is known \cite{DN} that 
  $L_{A_1}(1, 0)^+=V_{\ZZ_8}$, although we will not need this result here.\ In all other cases $L_{\widehat{g}_{i}}(1, 0)^+$ is a semisimple affine algebra of some level $k_i$.
Then with some care one may read-off the needed information that pertains in our case, say from the Tables in  \cite{Helg}.

  \medskip
  Actually, in the situation at hand one has additional information.\ Thus if $\alpha$ ranges over $\Psi_i^+$ (positive roots) then $f_{\alpha}:=e^{\alpha}+e^{-\alpha}$ ranges over a basis for 
  $\frakg_{i, 1}^+$, so that
  $\dim\frakg_{i, 1}^+=|\Psi^+_i|$.\ Moreover if $\alpha$ ranges over $M\subseteq\Phi_i$ maximal subject to the condition $\alpha, \beta\in M\Rightarrow \alpha\pm\beta \notin\Psi_i$
  then the $f_{\alpha}$ span a Cartan subalgebra
  of $\frakg_{i, 1}^+$.\ Thus the Lie rank of $\frakg_{i, 1}^+$ is equal to $|M|$.\ This number is uniquely determined by $\Psi_i$ and is well-known (cf.\ \cite{AK}) to be given as follows:
\begin{eqnarray*}
  A_n:\ [(n+1)/2];\  D_n:\ 2[n/2]; \   E_6:\ 4;\ E_7:\ 7;\  E_8:\ 8.
  \end{eqnarray*}

  This Lie-theoretic data uniquely determines the conjugacy class of $\theta$ as an involution of the affine algebra.
  Finally, the levels may be determined in a standard way, or else one may notice that $L_{\widehat{\frakg}_{i}}(1, 0)$ and 
  $L_{\widehat{\frakg}_{i}}(1, 0)^+$ have the same central charge and then apply (\ref{cval}) in both cases, which involves the level in the case of the fixed-point subalgebra.\ Thus we arrive at

 \begin{thm}\label{thmDGM1}We have the following Table ($m\geq2$):
\begin{center}
    \begin{tabular}{|c||c|c|c|c|c|c|c|c|c|}
    \hline
       $\frakg_i$  & $A_1$ & $A_2$ & $A_{2m}$ & $A_{2m-1}$ & $D_{2m}$ & $D_{2m+1}$ & $E_6$ & $E_7$ & $E_8$ \\
       \hline
        $\frakg_{i, k_i}^+$ & $\CC$ & $A_{1,4}$ & $B_{m,2}$ & $D_{m,2}$ & $D_{m,1}^2$ & $B_{m,1}^2$ & $C_{4,1}$ & $A_{7,1}$ & $D_{8,1}$ \\
         \hline
    \end{tabular}
\end{center}
(with the conventions that $D_2=A_1^2$ and $D_3=A_3$). $\hfill\Box$
\end{thm}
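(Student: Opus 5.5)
The plan is to verify the table case by case, using the two pieces of data assembled just before the statement. These are the dimension $\dim\frakg_{i,1}^+=|\Psi_i^+|$ and the Lie rank $|M|$ from \cite{AK}. Together they pin down the type of the fixed-point Lie algebra $\frakg_i^+$ as the fixed points of an involution of $\frakg_i$. The level $k_i$ will then come from equating central charges: $L_{\widehat{\frakg}_i}(1,0)$ and its $\theta$-fixed subVOA have the same Virasoro vector, so $\ell_i=\sum_j d_jk_j/(k_j+h_j^\vee)$ by (\ref{cval}).

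\textbf{Identifying the type.} I would first match the dimension and rank against the standard list of involutions of simple Lie algebras, via Kac's classification or the tables in \cite{Helg}. For $\frakg_i=A_n$ we have $|\Psi^+|=n(n+1)/2$ and rank $[(n+1)/2]$.
\begin{itemize}
\item If $n=2m$, the fixed algebra is $\mathfrak{so}_{2m+1}=B_m$, and indeed $\dim B_m=m(2m+1)$. For $m=1$ this is $A_1$.
\item If $n=2m-1$, the fixed algebra is $\mathfrak{so}_{2m}=D_m$, with dimension $m(2m-1)$.
\end{itemize}
The alternative involutions with fixed algebra $\mathfrak{sp}$ or $\mathfrak{s}(\mathfrak{gl}\oplus\mathfrak{gl})$ would have the wrong dimension or rank. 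For $D_n$ we have $|\Psi^+|=n(n-1)$ and rank $2[n/2]$.
\begin{itemize}
\item If $n=2m$, the fixed algebra is $D_m\oplus D_m$, with dimension $2m(2m-1)$.
\item If $n=2m+1$, it is $B_m\oplus B_m$, with dimension $2m(2m+1)$.
\end{itemize}
For the exceptional cases, $E_6$ gives dimension $36$ and rank $4$, hence $C_4$. $E_7$ gives dimension $63$ and rank $7$, hence $A_7$. $E_8$ gives dimension $120$ and rank $8$, hence $D_8$. The case $A_1$ is immediate, since there $f_\alpha$ spans a one-dimensional space.

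\textbf{Computing the levels.} Next I would compute each level from the central-charge equation, using Table \ref{TRS}. All simple summands of $\frakg_i^+$ are isomorphic, and $\theta$ swaps nothing, so they share a common level $k$. For example, for $A_{2m}$ the equation $2m=m(2m+1)k/(k+2m-1)$ gives $k=2$; for $m=1$ it gives $A_{1,4}$, because the $B_1$ normalization rescales by a factor $2$. For $D_{2m}$, the equation $2m=2\cdot m(2m-1)k/(k+2m-2)$ gives $k=1$. The exceptional rows check out in the same way; for instance, for $E_8$ the equation $8=120k/(k+14)$ gives $k=1$.

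\textbf{Main obstacle.} The main subtlety is the normalization of the level in the small-rank coincidences: $B_1=A_1$, $D_2=A_1^2$, $D_3=A_3$, and the $A_2$ row. There $(\ ,\ )_i$ is normalized on long roots, and the central-charge formula must be applied with the correct $h^\vee$. For $A_2$ one cannot simply specialize the $B_m$ row. Instead I would compute directly: the fixed algebra $A_1$ must carry central charge $2=3k/(k+2)$, which forces $k=4$, consistent with the embedding index $4$ of $\mathfrak{so}_3\subset\mathfrak{sl}_3$.
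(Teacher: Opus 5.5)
Your proposal is correct and is essentially the paper's own argument: you identify $\frakg_i^+$ by matching $\dim\frakg_i^+=|\Psi_i^+|$ and the rank $|M|$ against the classification of involutions, then obtain the levels by equating central charges via (\ref{cval}), and all your numerical checks are correct, including the $B_1=A_1$ renormalization that gives $A_{1,4}$. The one point to tighten is that equating central charges presupposes that $\langle\frakg_i^+\rangle$ is conformally embedded (the paper also assumes this), and with several simple factors it does not by itself force a common level: for $A_3\supset D_2=A_1^2$ the assignment $A_{1,1}A_{1,4}$ also has $c=3$. So your remark that ``$\theta$ swaps nothing'' should be replaced by an actual reason, e.g.\ conjugation by a reflection in $O_4\subset GL_4$ interchanges the two $A_1$ factors while preserving the invariant form on $\frakg_i$, or a direct computation of each level from (\ref{compare1}) as you did for $A_2$.
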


We can now prove
\begin{thm}\label{balancedDGM}
Suppose that $L$ is an even unimodular lattice of any rank $c$ such that the DGM orbifold $V:=V_L^+\oplus (V_L^T)^+$ is balanced.\ Then $L$ is a complete lattice, i.e., the root system of $L$ also has rank $c$.
\end{thm}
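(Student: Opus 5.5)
The plan is to compare two ways of computing the total central charge $c_T$ of the root system of $V$. Balance will force $c_T=c$, while the structure of $V_1$ described in Theorem \ref{thmDGM1} will force $c_T=\rk R$, where $R$ is the root sublattice of $L$. Together these give $\rk R=c$, which is exactly completeness of $L$.

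First I pin down $V_1$. By the discussion preceding Theorem \ref{thmDGM1}, neither the twisted part nor the radical of $(V_L)_1$ contributes to $V_1$. Hence $V_1=\bigoplus_i \frakg_{i,1}^+$, where $\frakg_i$ runs over the ADE components of the root system of $L$. If $L$ has no roots then $V_1=0$, and $V$ has no root system, so it cannot be balanced (Definition \ref{dfnbalRS} concerns nonzero reductive Lie algebras). So we may assume $V_1\neq 0$. By Theorem \ref{thmDGM1}, each $A_1$ component of $L$ contributes a one-dimensional summand $\CC$. These summands are central in $V_1$ because distinct components commute. Every other component contributes a semisimple affine algebra. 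Thus the radical $A$ of $V_1$ has $\dim A=n_1$, the number of $A_1$ components of $L$, and the Levi factor $S$ is the sum of the remaining $\frakg_{i,k_i}^+$.

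Next I compute $c_T=c_A+c_{aff}$ for $V$. The Heisenberg part has $c_A=\dim A=n_1$, which equals the sum of the ranks $\ell_i=1$ of the $A_1$ components. For every other component, the text before Theorem \ref{thmDGM1} observes that $L_{\widehat{\frakg}_i}(1,0)^+$ and $L_{\widehat{\frakg}_i}(1,0)$ have the same central charge. The latter equals $\ell_i$, since $\frakg_i$ is simply laced of level $1$. Summing over all components gives
\[
c_T=\sum_i \ell_i=\rk R .
\]

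Finally I invoke the balance condition. The chain of equalities in the proof of Theorem \ref{thmbalab} uses only (\ref{balance3}) and (\ref{cval}), not the value $c=32$ or $40$. Applying that computation to the balanced root system of $V$ therefore gives $c_T=\dim A+\frac{c-\dim A}{\dim S}\sum_i d_i=c$. If $S=0$, balance instead means $\dim A=c$ directly, and the same conclusion holds. Hence $\rk R=c=\rk L$, so $L$ is complete.

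The only step needing care is the identification of the radical of $V_1$ with the $A_1$ contributions, together with the equality of its Heisenberg central charge with $n_1$. This is where the $A_1$ case differs from the others, and it is what makes the count $c_T=\sum_i\ell_i$ hold uniformly across all components.
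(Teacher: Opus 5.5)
Your proof is correct and is essentially the paper's argument. The paper's case-by-case identity $\ell_i=\dim L_i/(1+h_i^{\vee}/k_i)$ for $\ell_i\geq 2$ is your claim that each non-$A_1$ summand $\frakg^+_{i,k_i}$ has central charge $\ell_i$, and combining it with (\ref{balance3}) and $\dim B=\#\{i:\ell_i=1\}$ to force the radical of $(V_L)_1$ to vanish is the same bookkeeping as your $c=c_T=\rk R$, with the $c_T$ packaging via Theorem \ref{thmbalab} just tidier (that central-charge claim is best checked directly from the table in Theorem \ref{thmDGM1} via (\ref{cval}), since what matters is the affine subalgebra generated by $\frakg_i^+$, not the whole fixed-point VOA).
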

\begin{proof}
  We keep earlier notation and set $\ell_i=$ Lie rank of $\frakg_i$, $L_i:=\frakg_{i,k_i}^+$.\ The first main point  is the identity
  \begin{eqnarray}\label{latticeid}
      \ell_i= \frac{\dim L_i}{1+(h_i^{\vee}/k_i)}\ \ (\ell_i\geq 2),
  \end{eqnarray}
  where $h_i^{\vee}$ is the dual Coxeter number of any of the simple components of $L_i$.\ To verify this one can just proceed in a case-by-case manner.

  \medskip
  On the other hand, since $V_1$ is balanced with Levi decomposition $V_1=B\oplus T$ ($B$ is the radical and $T$ the Levi factor) then
  by (\ref{balance3})
  \begin{eqnarray}\label{balanceB}
    \frac{\dim T}{1+(h_i^{\vee}/k_i)}= c-\dim B,
  \end{eqnarray}
  and by (the proof of) Theorem \ref{thmDGM1} we know that $\dim B$ is equal to the number of indices $i$ such that $l_i=1$.\ Thus by (\ref{latticeid}) and (\ref{balanceB}) we have
  \begin{eqnarray*}
    && \frac{\dim T}{1+(h_i^{\vee}/k_i)}=
    \dim A+\sum_{i,\ell_i\geq2}\ell_i\\
    &&=\dim A+\sum_{\ell_i\geq2} \frac{\dim L_i}{1+(h_i^{\vee}/k_i)}.
  \end{eqnarray*}
  Now $T=\oplus_{\ell_i\geq2}L_i$ and $h_i^{\vee}/k_i$ is independent of $i$, so the last displayed equations imply that $A=0$, that is, $(V_L)_1$ is semisimple.\ This completes the proof of the Theorem.
\end{proof}

We discuss some typical applications of the last Theorem.\ Suppose first that $L$ is an even unimodular lattice with $c=24$ not equal to the Leech lattice.\ Then in the above notation $V$ is holomorphic with $c=24$ and $V_1\neq0$, whence $V$ is balanced by \cites{DM1}.\ Therefore $L$ is complete by Theorem \ref{balancedDGM}.\ This was first proved by Niemeier \cites{N}; the proof just given is independent of this work.

\medskip
Turning to the case $c=32$, after Theorem \ref{balancedDGM} we can use Kervaire's classification \cites{Ke} of complete unimodular lattices and their root systems at rank $c=32$ to write down all balanced DGM orbifolds with $c=32$.\ The outcome is the list of root systems in part b) of Theorem \ref{thmpurepower}.

\medskip
We finish with one last illustration.\
Consider the root system $A_{1, 1}^{12}A_{3, 2}^4$ that occurs in part c) of Theorem \ref{thmpurepower}.\ We assert that it is \textit{not} the root system of a DGM orbifold.\ If false, and if it is the root system of
$V_L^+\oplus (V_L^T)^+$ for some $L$ then from Theorem \ref{thmDGM1} we deduce that the root system of $L$ must be
$D_{4, 1}^3A_{5, 1}^4$.\ Thus $L$ is a complete lattice, as expected after Theorem \ref{balancedDGM}.\ However by \cites{Ke} or \cites{OK1} there is no $L$ with such a 
root system.\ This proves our assertion.

\section{Computational aspects}\label{secCompute}
We have so far developed the basic theory of balanced (holomorphic) VOAs, mainly when $c=32$ or $40$, but for a \textit{general} reductive $V_1$.\ From now we will further restrict ourselves to the \textit{semisimple} case where 
$A=0$ and 
$$V_1 = \frakg_{1, k_1}\oplus \hdots\oplus\frakg_{n, k_n}.$$
Thus from Subsection \ref{SSbal32}, the subalgebra $U$ (\ref{Udef}) is generated by $V_1$ and reduces to  a tensor product of affine algebras and $V$ decomposes into a finite direct sum of graded irreducible $U$-modules.\ We know from Table \ref{tab:numBRS} that there are $449$ semisimple, balanced root systems with $c=32$, and the goal of the present Section is to describe how we can check that most of these potential root systems do \textit{not}, in fact, arise from a holomorphic VOA.\ The only thing that has prevented us extending this computation to the $1277$ semisimple, balanced root systems with $c=40$ is a dearth of computational resources.\ Indeed some cases with $c=32$ and $\dim V_1<56$ also remain untreated for the same reason.\
See Appendix A for a precise list.

\medskip
Only $19$ balanced root systems having $c=32$ and $\dim V_1\geq 56$ survive all of the tests we describe below.\ These are listed in Theorem \ref{thmpurepower}.\ Some of these root systems are attached to holomorphic lattice theories and some to holomorphic DGM orbifolds.\ The latter 
 are recognized using the results of Section \ref{SDGM}, especially Theorems
 \ref{thmDGM1} and \ref{balancedDGM}.\ Lattice theories are easily identified from their root systems: the Lie rank is $32$, the root systems are ADE type and all levels are $1$ \cites{DM2}.\ In both cases the results of \cites{Ke} and \cites{OK1} are also crucial.

 \medskip
 The $4$ root systems listed in part (c) of Theorem \ref{thmpurepower} pass all of the tests described below, and in particular the character of $V$ (cf. Section \ref{Schar}) may indeed be written as a finite sum of characters of irreducible $U$-modules.\ It seems likely that balanced holomorphic VOAs with these root systems indeed exist.\ This, however, remains open.\ Moreover we do not address questions concerning \textit{uniqueness}.
\subsection{The dimension test}\label{secdimtest}
We keep the notation introduced earlier in this Section.\ The character
$\ch_V(\tau)$ of $V$ is uniquely determined by $d_1:=\dim V_1$ as in Lemma~\ref{lem5}.\ And we have seen we may decompose $V$ into a finite direct sum of irreducible $U$-modules (with multiplicities) which are graded subspaces of $V$
$$V=L_{\widehat \frakg}(k,0)+\sum_{\lambda\neq0}L_{\widehat \frakg}(k,\lambda),$$
where $\lambda=(\lambda_1,...,\lambda_r)$ is a tuple of highest weights $\lambda_i$ for $\frakg_{i,k_i}$ and 
\begin{gather}\label{eqLglambda}
L_{\widehat\frakg}(k,\lambda)=L_{\widehat \frakg_1}(k_1,\lambda_1)\otimes ...\otimes L_{\widehat\frakg_r}(k_r,\lambda_r).
\end{gather}
Thus by Lemma \ref{lem5} once more it must be possible to write the number 
$$m:=\dim V_2-\dim{L_{\widehat{\frakg}}(k,0)_2}=\dim V_1+139504-\dim\textrm{Sym}^2 \frakg$$ 
as a nonnegative integer linear combination of the dimensions of the irreducible modules of conformal weight $2$ for the affine algebra.\ These dimensions can be computed based solely on the data associated with the root systems.\ Essentially the question becomes whether there is a partition of $\dim V_2$ whose parts are restricted to the (finite) set 
$$D=\{\dim M\: |\: M\in\textrm{Irr}(\widehat{V_1}),\ \textrm{wt} (M)=2\}.$$ 
Such a partition cannot possibly exist if $\gcd(D)\nmid m$ and it is guaranteed to exist if $m/\gcd(D)\in\ZZ$ exceeds the Frobenius number of $\frac{1}{\gcd(D)}D$, easily computable bounds for which are known (see for instance \cites{Frob} and the references therein). In order to check explicitly if a partition exists, we use generating functions.\
Of the $449$ balanced root systems for  $c=32$, this simple test alone rules out almost $1/3$ of all possibilities.
\subsection{The Jacobi forms test}\label{secjactest}
The previous test may be refined by using a Jacobi form variant of the above idea.\ For example  this is used in \cites{EMS} to rigorously derive Schellekens's list.

\medskip
Following \cites{KM} we define the Jacobi form version of the character of $V$ as
$$Z_V(\tau,z):=\mathrm{Tr}_V e^{2\pi i h(0)z}q^{L(0)-c/24},$$
where $h\in V_1$ is such that $h(0)$ has integral eigenvalues. 

\medskip
According to \cite{KM}*{Theorem 2} $Z_V(\tau, z)$ transforms like a Jacobi form of weight $0$ and index $m=\langle h,h\rangle/2$ (which, according to loc. cit., is an integer) with a character, i.e., for
$\gamma=\left(\begin{smallmatrix} a & b \\ c & d \end{smallmatrix}\right)\in\SLZ$ we have
\begin{gather}
    \label{eqchjacobi}
    Z_V\left(\frac{a\tau+b}{c\tau +d},\frac{z}{c\tau+d}\right)=\exp\left(2\pi i\frac{\langle h,h\rangle}{2}\cdot\frac{c}{c\tau+d}\right)\chi(\gamma)Z_V(\tau,z).
\end{gather}
Furthermore it is holomorphic on $\mathbf H\times\CC$. In fact we can be more precise about the character and state that $\eta^{c}(\tau)Z_V(\tau,z)$ is a weak Jacobi form of weight $c/2$ and index $\langle h,h\rangle/2$.

For a $\frakg$-module $M$ and $h\in\mathcal H$ define
$$S_M^j(h)=\sum_{\mu\in\Pi(M)}m_\mu \mu(h)^j,$$
$\Pi(M)$ denoting the set of weights for $M$ and $m_\mu$ denoting the multiplicity of the weight $\mu$. In particular $S_M^0(h)=\dim(M).$

\medskip
Analogously to Theorem 6.1 in \cites{EMS}, we have the following relations for the functions $S_{V_2}^j(h).$
\begin{thm}\label{thmS2j}
    Set $S_{i}^j:=S_{V_i}^j(h)$ and $d_i:=\dim(V_i)$. Then the following identities hold:
    \begin{enumerate}
        \item $c=32$.
    \begin{align*}
        d_2&=248d_1+139504\\
        S_2^2&=-496S_1^2+(60d_1+16440)\langle h,h\rangle \\
        S_2^4&=488S_1^4-504S_1^2\langle h,h\rangle+(36d_1+5328)\langle h,h\rangle^2 \\
        S_2^6&=-256S_1^6+900S_1^4\langle h,h\rangle-540S_1^2\langle h,h\rangle^2\\
        &\qquad\qquad+(30d_1+2640)\langle h,h\rangle^3\\
        4S_2^{10}-15S_2^8\langle h,h\rangle&=-64S_1^{10}-120S_1^8\langle h,h\rangle+5040S_1^6\langle h,h\rangle^2-12600S_1^4\langle h,h\rangle^3\\
        &\qquad\qquad +6300S_1^2\langle h,h\rangle^4-(315d_1+20160)\langle h,h\rangle^5
    \end{align*}
    \item $c=40$.
    \begin{align*}
        d_2&=496d_1+20620\\
        S_2^2&=-248S_1^2+(60d_1+1560)\langle h,h\rangle \\
        4S_2^6-5S_2^4\langle h,h\rangle &=-32S_1^6-80S_1^4\langle h,h\rangle+360S_1^2\langle h,h\rangle^2-(60d_1+1200)\langle h,h\rangle^3.
    \end{align*}
    \end{enumerate}
\end{thm}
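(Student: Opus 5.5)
The plan is to work directly with the Jacobi form $\phi(\tau,z):=\eta^{c}(\tau)Z_V(\tau,z)$. By \cite{KM}*{Theorem 2} and the remarks after (\ref{eqchjacobi}), $\phi$ is a weak Jacobi form of weight $c/2$ and index $m=\langle h,h\rangle/2$. We then extract the identities from its Taylor expansion in $z$. Write $\phi(\tau,z)=\sum_{j\ge0}\chi_j(\tau)z^j$. Since $e^{2\pi i h(0)z}$ acts on $V_n$ with trace $\sum_{\mu}m_\mu e^{2\pi i\mu(h)z}$, we have
\[
\chi_j(\tau)=\frac{(2\pi i)^j}{j!}\,\eta^c(\tau)\,q^{-c/24}\sum_{n\ge0}S^j_{V_n}(h)\,q^n .
\]
Only even $j$ occur, because the weights of $V_n$ are symmetric under $\mu\mapsto-\mu$ (the Weyl group, or the contragredient symmetry of the holomorphic $V$). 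Moreover $S^j_{V_0}=0$ for $j>0$.

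The standard device (as in \cites{EMS}) is to remove the failure of modularity of the Taylor coefficients. Consider the modified Taylor coefficients
\[
\xi_{2\nu}(\tau)=\sum_{\mu\le\nu}\frac{(-2\pi i m)^{\mu}\,(k+2\nu-\mu-2)!}{(k+2\nu-2)!\,\mu!}\,E_2^{\mu}\ \text{-corrected }\chi_{2\nu-2\mu}^{(\mu)},
\]
or, more cleanly, the exponential correction $e^{\frac{\pi^2}{3}m E_2(\tau)z^2}\phi(\tau,z)$. Its $z^{2\nu}$-coefficients are modular forms of weight $c/2+2\nu$ on $\SLZ$; for weak Jacobi forms they are holomorphic modular forms after multiplying out, since $\eta^c Z_V$ has only $q^{\geq0}$ terms. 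Writing $E_2=1-24q-\cdots$, the $z^{2\nu}$-coefficient becomes an explicit finite combination of the $\eta^c\sum_n S^{2\nu-2\mu}_{V_n}q^{n-c/24}$ times powers of $m E_2$. Its $q$-expansion through $q^2$ involves only $d_1,d_2,S^j_1,S^j_2$ and $\langle h,h\rangle$, together with the known expansions of $\eta^c$ and $E_2$.

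Next we identify each coefficient inside a small space of modular forms. For $c=32$ the weights are $16,18,20,22,\dots$ and the spaces $M_{16},M_{18},M_{20},M_{22}$ have dimension $2$. Hence each is determined by its $q^0,q^1$ coefficients, and the $q^2$ coefficient gives one linear relation. That relation is exactly the formula for $d_2$ (weight $16$, recovering Lemma \ref{lem5}), $S^2_2$ (weight $18$), $S^4_2$ (weight $20$), and $S^6_2$ (weight $22$). At weight $26$ and beyond, $\dim M_{24}=3$, so at $z^8$ (weight $24$) there is no relation. At $z^{10}$ (weight $26$, dimension $2$) a relation exists, but it involves $S^{10}_2$ together with lower terms; after eliminating via the $z^8$ coefficient, the combination $4S_2^{10}-15S_2^8\langle h,h\rangle$ should appear. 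For $c=40$ the weights are $20,22,24,26$ with dimensions $2,2,3,2$. This explains why only $d_2$, $S_2^2$ and the combined relation for $4S^6_2-5S^4_2\langle h,h\rangle$ survive, just as in Theorem 6.1 of \cites{EMS}.

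Concretely, for each weight we would write the relevant coefficient as $aE_4^{r}E_6^{s}+b\,\Delta\cdot(\text{form})$, or in terms of a basis, and match the $q^0$ and $q^1$ coefficients, which involve $d_0=1$, $d_1$, $S^j_1$ and $m$. We then read off the $q^2$ coefficient. The main obstacle is the bookkeeping at the higher-weight cases where a $3$-dimensional space intervenes. There one must find the specific linear combination of Taylor coefficients (equivalently, an appropriate differential/Taylor operator combination) whose modular image lies in a space cut out by two conditions. I would first try to eliminate the weight-$24$ ambiguity via the $q^0,q^1,q^2$ data of the $z^8$ coefficient, which fixes that form completely but leaves $S_2^8$ free, and substitute into the $z^{10}$ relation. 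This yields the stated combination, with the numerical constants checked by computer algebra.
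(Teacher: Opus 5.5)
Your proposal is correct and is essentially the paper's proof: your corrected form $e^{\frac{\pi^2}{3}mE_2(\tau)z^2}\eta^c(\tau)Z_V(\tau,z)$ is exactly the paper's $P(\tau,z)$, and the paper likewise matches the $q^0,q^1$ coefficients of its Taylor coefficients in the two-dimensional spaces of weights $16,18,20,22,26$ (resp.\ $20,22,26$) and reads off the $q^2$ coefficient. The only wrinkle is that no detour through the $z^8$ (resp.\ $z^4$) coefficient is needed: the $z^{10}$ (resp.\ $z^6$) coefficient already lies in a two-dimensional space, $S_2^8$ (resp.\ $S_2^4$) enters it with coefficient $-\tfrac{15}{4}\langle h,h\rangle$ (resp.\ $-\tfrac{5}{4}\langle h,h\rangle$) relative to $S_2^{10}$ (resp.\ $S_2^6$), it simply remains as an unknown, and $d_2,S_2^2,\dots$ are eliminated with the earlier identities.
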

\begin{proof}
    The function
    $$P(\tau,z):=\exp\left(-(2\pi iz)^2\frac{\langle h,h\rangle}{24}E_2(\tau)\right)\eta^{c}(\tau)\mathrm{ch}_V(\tau,z)$$
    transforms like
    \begin{gather}\label{eqPtrans}
        P\left(\frac{a\tau+b}{c\tau +d},\frac{z}{c\tau+d}\right)=(c\tau+d)^{c/2} P(\tau,z),
    \end{gather}
    which follows immediately from the well-known transformation property
    $$E_2\left(\frac{a\tau+b}{c\tau+d}\right)=(c\tau+d)^2E_2(\tau)+\frac{12}{2\pi i}c(c\tau+d)$$
    of the weight $2$ Eisenstein series. 
    Therefore, $P(\tau,z)$ is a Jacobi-like form of weight $c/2$ and index $0$ (for a definition and background on this see \cites{ZagierJacobi}), so its $n$th Taylor coefficient in $z$ is a modular form of weight $c/2+n$ for $\SLZ$, which easy to see from \eqref{eqPtrans}. The fact that they are holomorphic at $\infty$ follows from the Fourier expansion.\ All the stated identities follow from identifying these modular forms explicitly from their $q^0$ and $q^1$ term, which is possible for the relevant weights $16,18,20,22,26$.

    Note that the identities for $d_2$ in both cases $c=32$ and $c=40$ are already stated in Lemma~\ref{lem5}. 
\end{proof}

\begin{rmk}
    Also note that similar relations can of course be obtained $d_i,S_i^2,S_i^4,S_i^6,$ and $S_i^{8}$ with $S_i^{10}$ for arbitrary $i$ by analyzing the coefficient of $q^i$ in the modular forms mentioned in the proof. However, it is not possible to go beyond the value $j=10$ for $S_i^j$ in the same way because then the relevant spaces of modular forms have dimensions greater than $2$. 
\end{rmk}

Again by decomposing $V_2$ into irreducible $\widehat{\frakg}$-modules,
$$V_2=(L_{\widehat \frakg}(k,0))_2\oplus \bigoplus_\lambda m_\lambda (L_{\widehat\frakg}(k,\lambda))_2,$$
the identities in Theorem~\ref{thmS2j} yield a system of linear equations for the multiplicities $m_\lambda$, say $A\vec m=b$. Since these multiplicities must be non-negative integers, we can rule out all root systems where this system has no solution in non-negative integers. 

\medskip
We use the built-in functionality for solving such (integer) linear programs in MAGMA \cites{Magma}, which in turn uses the lp\textunderscore solve-library \cites{lpsolve} to decide this. In fact there are three stages to the computation:
\begin{itemize}
    \item Check for integral solutions (no restrictions on the sign): This doesn't rely on Linear Program and is carried out using MAGMA's internal functionalities.
    \item Check for positive (not necessarily integral) solutions: For each highest weight $\lambda$, we solve the (real-valued) optimization problem
    $\max m_\lambda$ with the constraint $A\vec m=b$. This either yields an additional upper bound (next to the trivial bound $m_\lambda\leq (\dim V_2-\dim (L_{k,0})_2)/\dim (L_{k,\lambda})_2$) for $m_\lambda$ if there is a non-negative maximum, or if the maximum is negative, there can be no non-negative integer solution to the system to begin with.
    \item Using the improved bounds from the previous step (which are frequently quite restrictive, forcing multiplicities to be $0$ in many cases), we now look for non-negative integer solutions of the system $A\vec m=b$. If there is none, we can rule out the root system. Note however that this computation is often quite time consuming.
\end{itemize}

\subsection{The character test}\label{secchartest}
For the previous two tests, we have only relied on the decomposition of the $V_2$-space as a module over the Lie algebra $\frakg=V_1$. Of course one may also take into account integrable modules of higher weights:\ as we have already explained, it is possible to decompose the full VOA $V$ as a finite direct sum of the finitely many irreducible $U$-modules,
$$V=L_{\widehat \frakg}(k,0)+\sum_{\lambda}L_{\widehat \frakg}(k,\lambda),$$
and we let $m_\lambda$ again denote the multiplicity of the module $L_{\widehat \frakg}(k,\lambda)$ which is described as in \eqref{eqLglambda}. Therefore it is necessary that there is a non-negative integer linear combination of the characters of the modules $L_{\widehat \frakg}(k,\lambda)$ which equals the character of $V$:
$$Z_V(\tau)=Z_{L_{\widehat \frakg}(k,0)}(\tau)+\sum_\lambda m_\lambda Z_{L_{\widehat \frakg}(k,\lambda)}(\tau).$$
In \cite{Gannon}*{Proposition 1} there is an explicit formula for the characters $Z_{L_{\widehat \frakg}(k,\lambda)}$ for a module over a simple Lie algebra $\frakg$,
\begin{equation}
    Z_{L_{\widehat \frakg}(k,\lambda)}(\tau,z)=q^{-c/24}\left(\prod_{n=1}^\infty\sum_{\ell=0}^\infty q^{n\ell}\dim(\operatorname{Sym}^\ell(\frakg))\right){\sum_{\gamma\in \kappa Q^\vee+\lambda}
q^{\langle\gamma,\gamma+2\rho\rangle/(2\kappa)}\ch_{\gamma}(0)}.
\end{equation}
Here, $z\in\mathcal H$ is an element of the Cartan subalgebra of $\frakg$, $\operatorname{Sym}^\ell(\frakg)$ denotes the $\ell$th symmetric power of $\frakg$, $\kappa=k+h^\vee$, $Q^\vee$ is the dual of the coroot lattice of $\frakg$ (so the usual root lattice of $\frakg$ if $\frakg$ is of $ADE$-type; for the remaining cases see \cite{Kac}*{Section 6.7}), $\rho$ is the Weyl vector and $\ch_\gamma(0)$ is obtained by formally plugging in $\gamma$ into the Weyl character formula (see e.g. \cite{Hall}*{Theorem 10.4}).

Using this formula, we can again obtain a linear system of equations for the multiplicities $m_\lambda$, which we can analyse in much the same way as described above. Note that this test has certain advantages in that it does not distinguish between modules which only differ in the order of their tensor factors, so the number of variables can be reduced compared to the Jacobi forms test, sometimes significantly so. By increasing the number of coefficients of the character used in the computation it is possible to adjust the performance of the test to some extent.

\subsection{Summary of the test results}
We implemented the tests described above in \textsc{Magma} and ran them on all 449 semisimple, balanced root systems for $c=32$.\ A total of $121$ root systems were ruled out by the dimension test (see Section \ref{secdimtest}), $245$ by the Jacobi forms test (Section \ref{secjactest}), and $25$ by the character test (Section \ref{secchartest}). The $19$ root systems listed in Theorem~\ref{thmpurepower} passed all three tests and 16 of these may be realized as root systems of holomorphic lattice theories or DGM orbifolds.\ This leaves $38$ root systems unaccounted for. For these, the computational resources the the authors' disposal were insufficient to finish the tests, either due to extremely long computation times (in some cases testing one root system was aborted after several months) and/or memory (exceeding 200 GB RAM).



\newpage
\appendix
\section{Balanced root systems for central charge $32$}
In this Appendix we record the computational results concerning balanced holomorphic VOAs of central charge $32$ described above, in the case that $V_1$ is a semisimple Lie algebra, grouped together by $\dim V_1=d_1$.

There are a total of $\mathbf{449}$ balanced root systems in this case. Of those 449, we were able to rule out $392$. We list all of these explicitly and indicate, whether they can occur as the root system of a holomorphic VOA: The letter $X$ next to a root system means that the root system was ruled out. An additional note about the computational test that ruled out the root system -- ``dim'' for the dimension test, ``Jac'' for the Jacobi forms test, and ``char'' for the character test (cf.\  Section 5) is also given. A checkmark \Checkmark next to a root system indicates that it passed \emph{all} computational tests we tried. Whenever it is known for sure that a holomorphic VOA with that root system exists (because there is a lattice theory or a DGM orbifold realizing the root system), an indication (``lat'' or ``DGM'') is given. A question mark ? next to a root system (38 cases) indicates that there was no conclusive result of the tests due to computational limitations: Except for the root systems $A_{1,64}^{3}A_{4,160}$ and $A_{5,64}$, they all passed the dimension test, though, whereas for the two exceptions, even this relatively simple test could not be carried out using the computational ressources available to the authors.

\newpage
\begin{multicols}{2}
\begin{center}
    \begin{tabular}{|p{0.5cm}p{1.5cm}p{3cm}p{1.5cm}|}
    \hline
    \# & $\dim V_1$ & Root system & Result\\
    \hline
    \hline 
     1 & 33 & $A_{1,64}A_{3,128}^2$ & ?\\
     2 & 33 & $A_{1,64}^{3}A_{4,160}$ & ?\\
     3 & 33 & $A_{2,96}A_{3,128}B_{2,96}$ & X (dim)\\
     4 & 33 & $A_{1,64}^{4}B_{3,160}$ & ?\\
     5 & 33 & $A_{1,64}B_{2,96}^{3}$ & ?\\
     6 & 33 & $A_{1,64}^{4}C_{3,128}$ & ?\\
     7 & 33 & $A_{1,64}A_{2,96}^2G_{2,128}$ & ?\\
     8 & 33 & $A_{1,64}^{3}B_{2,96}G_{2,128}$ & ? \\
     9 & 33 & $A_{1,64}^{3}A_{2,96}^{3}$ & ?\\
    10 & 33 & $A_{1,64}^{6}A_{3,128}$ & ? \\
    11 & 33 & $A_{1,64}^{5}A_{2,96}B_{2,96}$ & ?\\
    12 & 33 & $A_{1,64}^{11}$ & ?\\
\hline 
    13 & 34 &  $A_{4,80}B_{2,48} $ & X (dim)\\
    14 & 34 &  $A_{1,32}B_{2,48}B_{3,80} $  & X (Jac) \\
    15 & 34 &  $A_{1,32}B_{2,48}C_{3,64} $  & X (dim)\\
    16 & 34 &  $A_{1,32}^{2}D_{4,96} $  & X (dim) \\
    17 & 34 &  $B_{2,48}^{2}G_{2,64} $  & X (dim)\\
    18 & 34 &  $A_{1,32}^{2}G_{2,64}^{2} $  & ? \\
    19 & 34 &  $A_{1,32}A_{2,48}^2A_{3,64} $  & X (dim) \\
    20 & 34 &  $A_{2,48}^{3}B_{2,48} $ & X (dim) \\
    21 & 34 &  $A_{1,32}^{3}A_{3,64}B_{2,48} $  & X (dim) \\
    22 & 34 &  $A_{1,32}^{2}A_{2,48}B_{2,48}^{2} $  & X (dim) \\
    23 & 34 &  $A_{1,32}^{4}A_{2,48}G_{2,64} $  & ? \\
    24 & 34 &  $A_{1,32}^{6}A_{2,48}^{2} $  & ? \\
    25 & 34 &  $A_{1,32}^{8}B_{2,48} $  & ? \\
\hline
    26 & 35 & $A_{5,64}$ & ?\\
\hline 
    27 & 36 & $B_{4,56}$ & X (dim) \\
    28 & 36 & $C_{4,40}$  & X (dim)\\
    29 & 36 & $A_{3,32}B_{3,40}$  & X (dim)\\
    30 & 36 & $A_{3,32}C_{3,32}$  & X (dim)\\
    31 & 36 & $A_{2,24}D_{4,48}$ & X (dim)\\
    32 & 36 & $A_{2,24}G_{2,32}^{2}$  & X (dim)\\
    33 & 36 & $A_{1,16}^{2}A_{3,32}^{2}$ & X (Jac) \\
    34 & 36 & $A_{1,16}^{4}A_{4,40}$ & X (Jac) \\
    35 & 36 & $A_{1,16}A_{2,24}A_{3,32}B_{2,24}$ & X (Jac) \\
    36 & 36 & $A_{1,16}^{5}B_{3,40}$ & X (Jac) \\
    37 & 36 & $A_{2,24}^{2}B_{2,24}^{2}$ & X (Jac) \\
    38 & 36 & $A_{1,16}^{2}B_{2,24}^{3}$ & X (Jac) \\
    39 & 36 & $A_{1,16}^{5}C_{3,32}$ & ? \\
    40 & 36 & $A_{1,16}^{2}A_{2,24}^{2}G_{2,32}$ & X (Jac) \\
    \hline
    \end{tabular}
\end{center}

    \begin{center}
        \begin{tabular}{|p{0.5cm}p{1.5cm}p{3cm}p{1.5cm}|}
    \hline
    \# & $\dim V_1$ & Root system & Result\\
    \hline
    \hline 
    41 & 36 & $A_{1,16}^{4}B_{2,24}G_{2,32}$ & X (Jac) \\
    42 & 36 & $A_{1,16}^{4}A_{2,24}^{3}$ & ? \\
    43 & 36 & $A_{1,16}^{7}A_{3,32}$ & ? \\
    44 & 36 & $A_{1,16}^{6}A_{2,24}B_{2,24}$ & ? \\
    45 & 36 & $A_{1,16}^{12}$ & ? \\
\hline
    46 & 38 & $B_{2,16}D_{4,32}$ & X (Jac) \\
    47 & 38 & $A_{2,16}B_{2,16}^{3}$ & X (Jac) \\
\hline
    48 & 40 &  $A_{2,12}^{2}A_{4,20} $ & X (Jac) \\
    49 & 40 &  $A_{3,16}^{2}B_{2,12} $  & X (Jac) \\
    50 & 40 &  $A_{1,8}^{2}A_{4,20}B_{2,12}$ & X (Jac)\\
    51 & 40 &  $A_{1,8}A_{2,12}^2B_{3,20} $  & X (Jac) \\
    52 & 40 &  $A_{1,8}^{3}B_{2,12}B_{3,20} $  & ? \\
    53 & 40 &  $B_{2,12}^{4} $  & X (Jac) \\
    54 & 40 &  $A_{1,8}A_{2,12}^2C_{3,16} $  & X (Jac) \\
    55 & 40 &  $A_{1,8}^{3}B_{2,12}C_{3,16} $ & X (Jac) \\
    56 & 40 &  $A_{1,8}^{4}D_{4,24} $  & X (Jac) \\
    57 & 40 &  $A_{1,8}A_{2,12}A_{3,16}G_{2,16} $ & X (Jac) \\ 
    58 & 40 &  $A_{2,12}^{2}B_{2,12}G_{2,16} $  & X (Jac) \\
    59 & 40 &  $A_{1,8}^{2}B_{2,12}^{2}G_{2,16} $ & X (Jac) \\
    60 & 40 &  $A_{1,8}^{4}G_{2,16}^{2} $  & X (Jac) \\
    61 & 40 &  $A_{2,12}^{5} $ & ? \\
    62 & 40 &  $A_{1,8}^{3}A_{2,12}^{2}A_{3,16} $ & ? \\
    63 & 40 &  $A_{1,8}^{2}A_{2,12}^{3}B_{2,12} $  & X (Jac) \\
    64 & 40 &  $A_{1,8}^{5}A_{3,16}B_{2,12} $ & ? \\
    65 & 40 &  $A_{1,8}^{4}A_{2,12}B_{2,12}^{2} $ & ? \\
    66 & 40 &  $A_{1,8}^{6}A_{2,12}G_{2,16} $ & ? \\
    67 & 40 &  $A_{1,8}^{8}A_{2,12}^{2} $ & ? \\
    68 & 40 &  $A_{1,8}^{10}B_{2,12} $ & ? \\
\hline
    69 & 42 &  $B_{3,16}^{2} $ & X (dim)\\
\hline
    70 & 44 &  $A_{2,8}^{2}D_{4,16} $ & X (dim)\\
    71 & 44 &  $A_{2,8}^{3}B_{2,8}^{2} $  & X (dim)\\
\hline
    72 & 48 &  $A_{6,14} $ & X (dim)\\
    73 & 48 &  $A_{1,4}D_{5,16}  $ & X (dim)\\
    74 & 48 &  $A_{4,10}^{2}$ & X (Jac) \\
    75 & 48 &  $A_{1,4}A_{5,12}B_{2,6} $ & X (Jac) \\
    76 & 48 &  $A_{1,4}A_{4,10}B_{3,10}$ & X (Jac) \\
    77 & 48 &  $A_{1,4}^{4}B_{4,14}$ & X (Jac) \\
    78 & 48 &  $A_{1,4}^{2}B_{3,10}^{2}$ & X (Jac) \\
    79 & 48 &  $A_{1,4}A_{4,10}C_{3,8} $ & X (Jac) \\
    80 & 48 &  $A_{1,4}^{2}B_{3,10}C_{3,8} $ & X (Jac) \\
    \hline 
        \end{tabular}
    \end{center}
\end{multicols}

\begin{multicols}{2}
\begin{center}
    \begin{tabular}{|p{0.7cm}p{1.3cm}p{3cm}p{1.5cm}|}
    \hline
    \# & $\dim V_1$ & Root system & Result\\
    \hline
    \hline 
    81 & 48 &  $A_{1,4}^{4}C_{4,10} $ & X (Jac) \\
    82 & 48 &  $A_{1,4}^{2}C_{3,8}^{2} $ & X (Jac) \\
    83 & 48 &  $B_{2,6}^{2}D_{4,12} $ & X (Jac) \\
    84 & 48 &  $A_{4,10}B_{2,6}G_{2,8} $ & X (dim)\\
    85 & 48 &  $A_{1,4}B_{2,6}B_{3,10}G_{2,8} $ & X (Jac) \\
    86 & 48 &  $A_{1,4}B_{2,6}C_{3,8}G_{2,8} $ & X (Jac) \\
    87 & 48 &  $A_{1,4}^{2}D_{4,12}G_{2,8} $ & X (Jac) \\
    88 & 48 &  $B_{2,6}^{2}G_{2,8}^{2} $ & X (Jac) \\
    89 & 48 &  $A_{1,4}^{2}G_{2,8}^{3} $ & X (char) \\
    90 & 48 &  $A_{1,4}A_{3,8}^3 $ & ? \\
    91 & 48 &  $A_{2,6}^{3}A_{4,10} $ & X (Jac)  \\
    92 & 48 &  $A_{1,4}^{3}A_{3,8}A_{4,10} $ & X (Jac) \\
    93 & 48 &  $A_{2,6}A_{3,8}^2B_{2,6} $ & X (Jac) \\
    94 & 48 &  $A_{1,4}^{2}A_{2,6}A_{4,10}B_{2,6} $ & X (Jac) \\
    95 & 48 &  $A_{1,4}A_{2,6}^3B_{3,10} $ & X (Jac) \\
    96 & 48 &  $A_{1,4}^{4}A_{3,8}B_{3,10} $ & X (Jac) \\
    97 & 48 &  $A_{1,4}^{3}A_{2,6}B_{2,6}B_{3,10} $ & X (Jac)\\
    98 & 48 &  $A_{1,4}A_{3,8}B_{2,6}^{3} $ & X (Jac) \\
    99 & 48 &  $A_{2,6}B_{2,6}^{4} $ & X (Jac) \\
    100 & 48 & $A_{1,4}A_{2,6}^3C_{3,8}$ & X (Jac) \\
    101 & 48 & $A_{1,4}^{4}A_{3,8}C_{3,8}$ & ? \\
    102 & 48 & $A_{1,4}^{3}A_{2,6}B_{2,6}C_{3,8}$ & X (Jac) \\
    103 & 48 & $A_{1,4}^{4}A_{2,6}D_{4,12}$ & X (Jac) \\
    104 & 48 & $A_{1,4}A_{2,6}^2A_{3,8}G_{2,8}$ & X (Jac) \\
    105 & 48 & $A_{2,6}^{3}B_{2,6}G_{2,8}$ & ? \\
    106 & 48 & $A_{1,4}^{3}A_{3,8}B_{2,6}G_{2,8}$ & X (Jac) \\
    107 & 48 & $A_{1,4}^{2}A_{2,6}B_{2,6}^{2}G_{2,8}$ & X (Jac) \\
    108 & 48 & $A_{1,4}^{4}A_{2,6}G_{2,8}^{2}$ & X (char)\\
    109 & 48 & $A_{2,6}^{6}$ & X (char)\\
    110 & 48 & $A_{1,4}^{3}A_{2,6}^{3}A_{3,8}$ & X (Jac) \\
    111 & 48 & $A_{1,4}^{6}A_{3,8}^{2}$ & ? \\
    112 & 48 & $A_{1,4}^{8}A_{4,10}$ & X (char) \\
    113 & 48 & $A_{1,4}^{2}A_{2,6}^{4}B_{2,6}$ & ? \\
    114 & 48 & $A_{1,4}^{5}A_{2,6}A_{3,8}B_{2,6}$ & X (Jac) \\
    115 & 48 & $A_{1,4}^{9}B_{3,10}$ & X (char)\\
    116 & 48 & $A_{1,4}^{4}A_{2,6}^{2}B_{2,6}^{2}$ & X (Jac) \\
    117 & 48 & $A_{1,4}^{6}B_{2,6}^{3}$ & ? \\
    118 & 48 & $A_{1,4}^{9}C_{3,8}$ & ? \\
    119 & 48 & $A_{1,4}^{6}A_{2,6}^{2}G_{2,8}$ & X (Jac) \\
    120 & 48 & $A_{1,4}^{8}B_{2,6}G_{2,8}$ & X (char)\\
\hline 
\end{tabular}
\end{center}

\begin{center}
 \begin{tabular}{|p{0.7cm}p{1.3cm}p{3cm}p{1.5cm}|}
    \hline
    \# & $\dim V_1$ & Root system & Result\\
    \hline
    \hline 
    121 & 48 & $A_{1,4}^{8}A_{2,6}^{3}$ & ? \\
    122 & 48 & $A_{1,4}^{11}A_{3,8}$ & X (char)\\
    123 & 48 & $A_{1,4}^{10}A_{2,6}B_{2,6}$ & ? \\
    124 & 48 & $A_{1,4}^{16}$  & \Checkmark (DGM)\\
\hline
    125 & 56 & $D_{4,8}^{2}$ & \Checkmark \\
    126 & 56 & $A_{2,4}B_{2,4}^{2}D_{4,8}$ & X (Jac) \\
    127 & 56 & $A_{2,4}^{2}B_{2,4}^{4}$  & X (Jac) \\
    128 & 56 & $A_{2,4}^{7}$ & X (char)\\
\hline
    129 & 64 & $A_{1,2}^{3}B_{5,9}$  & X (dim) \\
    130 & 64 & $A_{1,2}^{3}C_{5,6}$ & X (dim)\\
    131 & 64 & $B_{4,7}D_{4,6}$ & X (dim)\\
    132 & 64 & $C_{4,5}D_{4,6}$ & X (dim)\\
    133 & 64 & $A_{1,2}^{4}F_{4,9}$ & X (dim)\\
    134 & 64 & $B_{4,7}G_{2,4}^{2}$ & X (dim)\\
    135 & 64 & $C_{4,5}G_{2,4}^{2}$ & X (dim)\\
    136 & 64 & $A_{2,3}^{2}A_{6,7}$ & X (dim)\\
    137 & 64 & $A_{1,2}^{2}A_{6,7}B_{2,3}$ & X (dim)\\
    138 & 64 & $A_{2,3}A_{5,6}B_{3,5}$ & X (dim)\\
    139 & 64 & $A_{1,2}A_{3,4}B_{2,3}B_{4,7}$ & X (dim)\\
    140 & 64 & $A_{2,3}B_{2,3}^{2}B_{4,7}$ & X (dim)\\
    141 & 64 & $A_{2,3}A_{5,6}C_{3,4}$ & X (dim)\\
    142 & 64 & $A_{1,2}A_{3,4}B_{2,3}C_{4,5}$ & X (Jac) \\
    143 & 64 & $A_{2,3}B_{2,3}^{2}C_{4,5}$ & X (Jac) \\
    144 & 64 & $A_{3,4}B_{3,5}D_{4,6}$ & X (dim)\\
    145 & 64 & $A_{3,4}C_{3,4}D_{4,6}$ & X (dim)\\
    146 & 64 & $A_{1,2}A_{2,3}^2D_{5,8}$ & X (dim)\\
    147 & 64 & $A_{1,2}^{3}B_{2,3}D_{5,8}$ & X (dim)\\
    148 & 64 & $A_{2,3}D_{4,6}^{2}$ & X (dim)\\
    149 & 64 & $A_{3,4}A_{5,6}G_{2,4}$ & X (dim)\\
    150 & 64 & $A_{1,2}^{2}A_{2,3}B_{4,7}G_{2,4}$ & X (Jac) \\
    151 & 64 & $A_{2,3}B_{3,5}^{2}G_{2,4}$ & X (dim)\\
    152 & 64 & $A_{2,3}B_{3,5}C_{3,4}G_{2,4}$ & X (dim)  \\
    153 & 64 & $A_{1,2}^{2}A_{2,3}C_{4,5}G_{2,4}$ & X (Jac) \\
    154 & 64 & $A_{2,3}C_{3,4}^{2}G_{2,4}$ & X (dim)\\
    155 & 64 & $A_{3,4}B_{3,5}G_{2,4}^{2}$ & X (dim)\\
    156 & 64 & $A_{3,4}C_{3,4}G_{2,4}^{2}$ & X (dim)\\
    157 & 64 & $A_{2,3}D_{4,6}G_{2,4}^{2}$ & X (dim)\\
    158 & 64 & $A_{2,3}G_{2,4}^{4}$ & X (dim)\\
    159 & 64 & $A_{2,3}^{2}A_{4,5}^{2}$ & X (Jac) \\
    160 & 64 & $A_{1,2}^{2}A_{2,3}A_{3,4}A_{5,6}$ & X (dim)\\
    \hline
\end{tabular}
\end{center}
\end{multicols}

\newpage

\begin{multicols}{2}
\begin{center}
    \begin{tabular}{|p{0.7cm}p{1.3cm}p{3cm}p{1.5cm}|}
    \hline
    \# & $\dim V_1$ & Root system & Result\\
    \hline
    \hline    
    161 & 64 & $A_{3,4}^{2}A_{4,5}B_{2,3}$ & X (Jac) \\
    162 & 64 & $A_{1,2}^{2}A_{4,5}^{2}B_{2,3}$ & X (Jac) \\
    163 & 64 & $A_{1,2}A_{2,3}^2A_{5,6}B_{2,3}$ & X (dim)\\
    164 & 64 & $A_{1,2}A_{2,3}^2A_{4,5}B_{3,5}$ & X (Jac) \\
    165 & 64 & $A_{1,2}^{3}A_{5,6}B_{2,3}^{2}$ & X (dim)\\
    166 & 64 & $A_{1,2}^{4}A_{2,3}^{2}B_{4,7}$ & X (Jac) \\
    167 & 64 & $A_{1,2}A_{3,4}^2B_{2,3}B_{3,5}$ & X (Jac) \\
    168 & 64 & $A_{1,2}^{3}A_{4,5}B_{2,3}B_{3,5}$ & X (Jac) \\
    169 & 64 & $A_{1,2}^{2}A_{2,3}^{2}B_{3,5}^{2}$ & X (Jac) \\
    170 & 64 & $A_{1,2}^{6}B_{2,3}B_{4,7}$ & X (Jac) \\
    171 & 64 & $A_{2,3}A_{3,4}B_{2,3}^{2}B_{3,5}$ & X (Jac) \\
    172 & 64 & $A_{4,5}B_{2,3}^{4}$ & X (Jac) \\
    173 & 64 & $A_{1,2}^{4}B_{2,3}B_{3,5}^2$ & X (Jac) \\
    174 & 64 & $A_{1,2}B_{2,3}^{4}B_{3,5}$ & X (Jac) \\
    175 & 64 & $A_{1,2}A_{2,3}^2A_{4,5}C_{3,4}$ & X (Jac) \\
    176 & 64 & $A_{1,2}A_{3,4}^2B_{2,3}C_{3,4}$ & X (dim)\\
    177 & 64 & $A_{1,2}^{3}A_{4,5}B_{2,3}C_{3,4}$ & X (Jac) \\
    178 & 64 & $A_{1,2}^{2}A_{2,3}^{2}B_{3,5}C_{3,4}$ & X (Jac) \\
    179 & 64 & $A_{2,3}A_{3,4}B_{2,3}^{2}C_{3,4}$ & X (dim)\\
    180 & 64 & $A_{1,2}^{4}B_{2,3}B_{3,5}C_{3,4}$ & X (Jac) \\
    181 & 64 & $A_{1,2}B_{2,3}^{4}C_{3,4}$ & X (dim)\\
    182 & 64 & $A_{1,2}^{4}A_{2,3}^{2}C_{4,5}$ & X (Jac) \\
    183 & 64 & $A_{1,2}^{6}B_{2,3}C_{4,5}$ & X (Jac) \\
    184 & 64 & $A_{1,2}^{2}A_{2,3}^{2}C_{3,4}^{2}$ & X (Jac) \\
    185 & 64 & $A_{1,2}^{4}B_{2,3}C_{3,4}^{2}$ & X (Jac) \\
    186 & 64 & $A_{1,2}^{2}A_{3,4}^{2}D_{4,6}$ & X (dim)\\
    187 & 64 & $A_{1,2}^{4}A_{4,5}D_{4,6}$ & X (Jac) \\
    188 & 64 & $A_{1,2}A_{2,3}A_{3,4}B_{2,3}D_{4,6}$ & X (dim)\\
    189 & 64 & $A_{1,2}^{5}B_{3,5}D_{4,6}$ & X (Jac) \\
    190 & 64 & $A_{2,3}^{2}B_{2,3}^{2}D_{4,6}$ & X (dim)\\
    191 & 64 & $A_{1,2}^{2}B_{2,3}^{3}D_{4,6}$ & X (dim)\\
    192 & 64 & $A_{1,2}^{5}C_{3,4}D_{4,6}$ & X (Jac) \\
    193 & 64 & $A_{1,2}A_{2,3}A_{3,4}A_{4,5}G_{2,4}$ & X (Jac) \\
    194 & 64 & $A_{1,2}^{5}A_{5,6}G_{2,4}$ & X (Jac) \\
    195 & 64 & $A_{2,3}^{2}A_{4,5}B_{2,3}G_{2,4}$ & X (Jac) \\
    196 & 64 & $A_{1,2}^{2}A_{2,3}A_{3,4}B_{3,5}G_{2,4}$ & X (Jac) \\ 
    197 & 64 & $A_{3,4}^{2}B_{2,3}^{2}G_{2,4}$ & X (dim)\\
    198 & 64 & $A_{1,2}^{2}A_{4,5}B_{2,3}^{2}G_{2,4}$ & X (Jac) \\
    199 & 64 & $A_{1,2}A_{2,3}^2B_{2,3}B_{3,5}G_{2,4}$ & X (Jac) \\
    200 & 64 & $A_{1,2}^{3}B_{2,3}^{2}B_{3,5}G_{2,4}$ & X (Jac) \\
\hline 
\end{tabular}
\end{center}

\begin{center}
\begin{tabular}{|p{0.7cm}p{1.3cm}p{3cm}p{1.5cm}|}
    \hline
    \# & $\dim V_1$ & Root system & Result\\
    \hline
    \hline 
    201 & 64 & $B_{2,3}^{5}G_{2,4}$ & X (dim)\\
    202 & 64 & $A_{1,2}^{2}A_{2,3}A_{3,4}C_{3,4}G_{2,4}$ & X (Jac) \\
    203 & 64 & $A_{1,2}A_{2,3}^2B_{2,3}C_{3,4}G_{2,4}$ & X (Jac) \\
    204 & 64 & $A_{1,2}^{3}B_{2,3}^{2}C_{3,4}G_{2,4}$ & X (Jac) \\
    205 & 64 & $A_{1,2}^{2}A_{2,3}^{2}D_{4,6}G_{2,4}$ & X (Jac) \\
    206 & 64 & $A_{1,2}^{4}B_{2,3}D_{4,6}G_{2,4}$ & X (Jac) \\
    207 & 64 & $A_{1,2}^{2}A_{3,4}^{2}G_{2,4}^{2}$ & X (Jac) \\
    208 & 64 & $A_{1,2}^{4}A_{4,5}G_{2,4}^{2}$ & X (Jac) \\
    209 & 64 & $A_{1,2}A_{2,3}A_{3,4}B_{2,3}G_{2,4}^{2}$ & X (Jac) \\
    210 & 64 & $A_{1,2}^{5}B_{3,5}G_{2,4}^{2}$ & X (Jac) \\
    211 & 64 & $A_{2,3}^{2}B_{2,3}^{2}G_{2,4}^{2}$ & X (Jac) \\
    212 & 64 & $A_{1,2}^{2}B_{2,3}^{3}G_{2,4}^{2}$ & X (Jac) \\
    213 & 64 & $A_{1,2}^{5}C_{3,4}G_{2,4}^{2}$ & X (Jac) \\
    214 & 64 & $A_{1,2}^{2}A_{2,3}^{2}G_{2,4}^{3}$ & X (Jac) \\
    215 & 64 & $A_{1,2}^{4}B_{2,3}G_{2,4}^{3}$ & X (Jac) \\
    216 & 64 & $A_{1,2}A_{2,3}^2A_{3,4}^3$ & X (dim)\\
    217 & 64 & $A_{2,3}^{5}A_{4,5}$ & X (Jac) \\
    218 & 64 & $A_{1,2}^{3}A_{2,3}^{2}A_{3,4}A_{4,5}$ & X (Jac) \\
    219 & 64 & $A_{1,2}^{7}A_{2,3}A_{5,6}$ & X (Jac) \\
    220 & 64 & $A_{2,3}^{3}A_{3,4}^{2}B_{2,3}$ & X (dim)\\
    221 & 64 & $A_{1,2}^{3}A_{3,4}^{3}B_{2,3}$ & X (dim)\\
    222 & 64 & $A_{1,2}^{2}A_{2,3}^{3}A_{4,5}B_{2,3}$ & X (Jac) \\
    223 & 64 & $A_{1,2}^{5}A_{3,4}A_{4,5}B_{2,3}$ & X (Jac) \\
    224 & 64 & $A_{1,2}A_{2,3}^5B_{3,5}$ & X (Jac) \\
    225 & 64 & $A_{1,2}^{4}A_{2,3}^{2}A_{3,4}B_{3,5}$ & X (Jac) \\
    226 & 64 & $A_{1,2}^{2}A_{2,3}A_{3,4}^{2}B_{2,3}^{2}$ & X (dim)\\
    227 & 64 & $A_{1,2}^{4}A_{2,3}A_{4,5}B_{2,3}^{2}$ & X (Jac) \\
    228 & 64 & $A_{1,2}^{3}A_{2,3}^{3}B_{2,3}B_{3,5}$ & X (Jac) \\
    229 & 64 & $A_{1,2}^{6}A_{3,4}B_{2,3}B_{3,5}$ & X (Jac) \\
    230 & 64 & $A_{1,2}A_{2,3}^2A_{3,4}B_{2,3}^{3}$ & X (dim)\\
    231 & 64 & $A_{1,2}^{5}A_{2,3}B_{2,3}^{2}B_{3,5}$ & X (Jac) \\
    232 & 64 & $A_{2,3}^{3}B_{2,3}^{4}$ & X (dim)\\
    233 & 64 & $A_{1,2}^{3}A_{3,4}B_{2,3}^{4}$ & X (dim)\\
    234 & 64 & $A_{1,2}^{2}A_{2,3}B_{2,3}^{5}$ & X (dim)\\
    235 & 64 & $A_{1,2}A_{2,3}^5C_{3,4}$ & X (Jac) \\
    236 & 64 & $A_{1,2}^{4}A_{2,3}^{2}A_{3,4}C_{3,4}$ & X (Jac) \\
    237 & 64 & $A_{1,2}^{3}A_{2,3}^{3}B_{2,3}C_{3,4}$ & X (Jac) \\
    238 & 64 & $A_{1,2}^{6}A_{3,4}B_{2,3}C_{3,4}$ & X (Jac) \\
    239 & 64 & $A_{1,2}^{5}A_{2,3}B_{2,3}^{2}C_{3,4}$ & X (Jac) \\ 
    240 & 64 & $A_{1,2}^{4}A_{2,3}^{3}D_{4,6}$ & X (Jac) \\
    \hline
\end{tabular}
\end{center}
\end{multicols}

\begin{multicols}{2}
\begin{center}
\begin{tabular}{|p{0.7cm}p{1.3cm}p{3cm}p{1.5cm}|}
    \hline
    \# & $\dim V_1$ & Root system & Result\\
    \hline
    \hline 
    241 & 64 & $A_{1,2}^{7}A_{3,4}D_{4,6}$ & X (Jac) \\
    242 & 64 & $A_{1,2}^{6}A_{2,3}B_{2,3}D_{4,6}$ & X (Jac) \\
    243 & 64 & $A_{1,2}A_{2,3}^4A_{3,4}G_{2,4}$ & X (Jac) \\
    244 & 64 & $A_{1,2}^{4}A_{2,3}A_{3,4}^{2}G_{2,4}$ & X (Jac) \\
    245 & 64 & $A_{1,2}^{6}A_{2,3}A_{4,5}G_{2,4}$ & X (Jac) \\
    246 & 64 & $A_{2,3}^{5}B_{2,3}G_{2,4}$ & X (Jac) \\
    247 & 64 & $A_{1,2}^{3}A_{2,3}^{2}A_{3,4}B_{2,3}G_{2,4}$ & X (Jac) \\
    248 & 64 & $A_{1,2}^{7}A_{2,3}B_{3,5}G_{2,4}$ & X (Jac) \\
    249 & 64 & $A_{1,2}^{2}A_{2,3}^{3}B_{2,3}^{2}G_{2,4}$ & X (Jac) \\
    250 & 64 & $A_{1,2}^{5}A_{3,4}B_{2,3}^{2}G_{2,4}$ & X (Jac) \\
    251 & 64 & $A_{1,2}^{4}A_{2,3}B_{2,3}^{3}G_{2,4}$ & X (Jac) \\
    252 & 64 & $A_{1,2}^{7}A_{2,3}C_{3,4}G_{2,4}$ & X (Jac) \\
    253 & 64 & $A_{1,2}^{4}A_{2,3}^{3}G_{2,4}^{2}$ & X (Jac) \\
    254 & 64 & $A_{1,2}^{7}A_{3,4}G_{2,4}^{2}$ & X (Jac) \\
    255 & 64 & $A_{1,2}^{6}A_{2,3}B_{2,3}G_{2,4}^{2}$ & X (Jac) \\
    256 & 64 & $A_{2,3}^{8}$ & X (char)\\
    257 & 64 & $A_{1,2}^{3}A_{2,3}^{5}A_{3,4}$ & X (Jac) \\
    258 & 64 & $A_{1,2}^{6}A_{2,3}^{2}A_{3,4}^{2}$ & X (Jac) \\
    259 & 64 & $A_{1,2}^{8}A_{2,3}^{2}A_{4,5}$ & X (Jac) \\
    260 & 64 & $A_{1,2}^{2}A_{2,3}^{6}B_{2,3}$ & X (char)\\
    261 & 64 & $A_{1,2}^{5}A_{2,3}^{3}A_{3,4}B_{2,3}$ & X (Jac) \\
    262 & 64 & $A_{1,2}^{8}A_{3,4}^{2}B_{2,3}$ & X (char)\\
    263 & 64 & $A_{1,2}^{10}A_{4,5}B_{2,3}$ & X (Jac) \\
    264 & 64 & $A_{1,2}^{9}A_{2,3}^{2}B_{3,5}$ & X (Jac) \\
    265 & 64 & $A_{1,2}^{4}A_{2,3}^{4}B_{2,3}^{2}$ & X (char)\\
    266 & 64 & $A_{1,2}^{7}A_{2,3}A_{3,4}B_{2,3}^{2}$ & X (Jac) \\
    267 & 64 & $A_{1,2}^{11}B_{2,3}B_{3,5}$ & X (Jac) \\
    268 & 64 & $A_{1,2}^{6}A_{2,3}^{2}B_{2,3}^{3}$ & X (Jac) \\
    269 & 64 & $A_{1,2}^{8}B_{2,3}^{4}$ & X (char)\\
    270 & 64 & $A_{1,2}^{9}A_{2,3}^{2}C_{3,4}$ & X (Jac) \\
    271 & 64 & $A_{1,2}^{11}B_{2,3}C_{3,4}$ & X (Jac) \\
    272 & 64 & $A_{1,2}^{12}D_{4,6}$ & X (Jac) \\
    273 & 64 & $A_{1,2}^{6}A_{2,3}^{4}G_{2,4}$ & X (Jac) \\
    274 & 64 & $A_{1,2}^{9}A_{2,3}A_{3,4}G_{2,4}$ & X (Jac) \\
    275 & 64 & $A_{1,2}^{8}A_{2,3}^{2}B_{2,3}G_{2,4}$ & X (Jac) \\
    276 & 64 & $A_{1,2}^{10}B_{2,3}^{2}G_{2,4}$ & X (Jac) \\
    277 & 64 & $A_{1,2}^{12}G_{2,4}^{2}$ & X (char)\\
    278 & 64 & $A_{1,2}^{8}A_{2,3}^{5}$ & X (char)\\
    279 & 64 & $A_{1,2}^{11}A_{2,3}^{2}A_{3,4}$ & X (char)\\
    280 & 64 & $A_{1,2}^{10}A_{2,3}^{3}B_{2,3}$ & X (Jac) \\

\hline 
    \end{tabular}
\end{center}

\begin{center}
    \begin{tabular}{|p{0.7cm}p{1.3cm}p{3cm}p{1.5cm}|}
    \hline
    \# & $\dim V_1$ & Root system & Result\\
    \hline
    \hline 
    281 & 64 & $A_{1,2}^{13}A_{3,4}B_{2,3}$ & X (char)\\
    282 & 64 & $A_{1,2}^{12}A_{2,3}B_{2,3}^{2}$ & X (char)\\
    283 & 64 & $A_{1,2}^{14}A_{2,3}G_{2,4}$ & X (char)\\
    284 & 64 & $A_{1,2}^{16}A_{2,3}^{2}$ & X (char)\\
    285 & 64 & $A_{1,2}^{18}B_{2,3}$ & X (char)\\
\hline
    286 & 72 & $C_{4,4}^{2}$ & \Checkmark \\
    287 & 72 & $A_{4,4}^{3}$ & X (dim)\\
\hline
    288 & 80 & $A_{8,6}$ & X (dim)\\
    289 & 80 & $D_{4,4}F_{4,6}$ & X (Jac) \\
    290 & 80 & $A_{2,2}B_{2,2}^{2}F_{4,6}$ & X (Jac) \\
    291 & 80 & $A_{5,4}^{2}B_{2,2}$ & X (Jac) \\
    292 & 80 & $A_{2,2}^{3}D_{4,4}^{2}$ & X (Jac) \\
    293 & 80 & $B_{2,2}^{8}$  & \Checkmark (DGM) \\
    294 & 80 & $A_{2,2}^{4}B_{2,2}^{2}D_{4,4}$ & X (Jac) \\
    295 & 80 & $A_{2,2}^{5}B_{2,2}^{4}$ & X (Jac) \\
    296 & 80 & $A_{2,2}^{10}$ & X (dim) \\
\hline
    297 & 96 & $A_{1,1}A_{3,2}E_{6,6}$ & X (dim) \\
    298 & 96 & $A_{1,1}^{2}A_{5,3}C_{5,3}$ & X (Jac) \\
    299 & 96 & $A_{3,2}^{2}D_{6,5}$ & X (Jac) \\
    300 & 96 & $A_{1,1}^{3}C_{3,2}D_{6,5}$ & X (Jac) \\
    301 & 96 & $A_{1,1}^{2}D_{5,4}^{2}$ & X (Jac) \\
    302 & 96 & $A_{1,1}^{2}C_{3,2}C_{5,3}G_{2,2}$ & X (Jac)\\
    303 & 96 & $A_{1,1}^{6}E_{6,6}$ & X (Jac) \\
    304 & 96 & $A_{1,1}A_{3,2}^2A_{7,4}$ & X (Jac) \\
    305 & 96 & $A_{1,1}^{4}A_{7,4}C_{3,2}$ & X (Jac) \\
    306 & 96 & $A_{3,2}^{2}C_{3,2}D_{5,4}$ & X (Jac) \\
    307 & 96 & $A_{1,1}^{3}C_{3,2}^{2}D_{5,4}$ & X (Jac) \\
    308 & 96 & $A_{1,1}^{5}A_{3,2}D_{6,5}$ & X (Jac) \\
    309 & 96 & $A_{1,1}^{4}A_{3,2}C_{5,3}G_{2,2}$ & X (Jac) \\
    310 & 96 & $A_{1,1}^{3}D_{4,3}D_{5,4}G_{2,2}$  & X (Jac) \\
    311 & 96 & $A_{1,1}^{3}D_{5,4}G_{2,2}^{3}$  & X (Jac) \\
    312 & 96 & $A_{1,1}^{6}A_{3,2}A_{7,4}$ & X (Jac) \\
    313 & 96 & $A_{1,1}A_{3,2}^2C_{3,2}^{3}$ & X (Jac) \\
    314 & 96 & $A_{1,1}^{4}C_{3,2}^{4}$ & X (Jac) \\
    315 & 96 & $A_{1,1}A_{3,2}^2A_{5,3}D_{4,3}$ & X (Jac) \\
    316 & 96 & $A_{1,1}^{4}A_{5,3}C_{3,2}D_{4,3}$ & X (Jac) \\
    317 & 96 & $A_{1,1}^{2}A_{3,2}^{3}D_{5,4}$ & X (Jac) \\
    318 & 96 & $A_{1,1}^{5}A_{3,2}C_{3,2}D_{5,4}$ & X (Jac) \\
    319 & 96 & $A_{1,1}^{10}D_{6,5}$  & X (Jac) \\
    320 & 96 & $A_{1,1}^{4}D_{4,3}^{3}$ & X (dim) \\
\hline 
\end{tabular}
\end{center}
\end{multicols}

\begin{multicols}{2}
\begin{center}
\begin{tabular}{|p{0.7cm}p{1.3cm}p{3cm}p{1.5cm}|}
    \hline
    \# & $\dim V_1$ & Root system & Result\\
    \hline
    \hline 
    321 & 96 & $A_{1,1}^{4}A_{5,3}^{2}G_{2,2}$ & X (dim) \\
    322 & 96 & $A_{1,1}^{9}C_{5,3}G_{2,2}$ & X (Jac) \\
    323 & 96 & $A_{1,1}A_{3,2}^2C_{3,2}D_{4,3}G_{2,2}$ & X (Jac) \\
    324 & 96 & $A_{1,1}^{4}C_{3,2}^{2}D_{4,3}G_{2,2}$ & X (Jac) \\
    325 & 96 & $A_{1,1}A_{3,2}^2A_{5,3}G_{2,2}^{2}$ & X (Jac) \\
    326 & 96 & $A_{1,1}^{4}A_{5,3}C_{3,2}G_{2,2}^{2}$ & X (Jac) \\
    327 & 96 & $A_{1,1}^{4}D_{4,3}^{2}G_{2,2}^{2}$ & X (dim) \\
    328 & 96 & $A_{1,1}A_{3,2}^2C_{3,2}G_{2,2}^{3}$ & X (Jac) \\
    329 & 96 & $A_{1,1}^{4}C_{3,2}^{2}G_{2,2}^{3}$ & X (Jac) \\
    330 & 96 & $A_{1,1}^{4}D_{4,3}G_{2,2}^{4}$ & X (dim) \\
    331 & 96 & $A_{1,1}^{4}G_{2,2}^{6}$ & X (dim) \\
    332 & 96 & $A_{1,1}^{11}A_{7,4}$ & X (Jac) \\
    333 & 96 & $A_{3,2}^{5}C_{3,2}$ & X (Jac) \\
    334 & 96 & $A_{1,1}^{3}A_{3,2}^{3}C_{3,2}^{2}$ & X (Jac) \\
    335 & 96 & $A_{1,1}^{6}A_{3,2}C_{3,2}^{3}$ & X (Jac) \\
    336 & 96 & $A_{1,1}^{6}A_{3,2}A_{5,3}D_{4,3}$ & X (Jac) \\
    337 & 96 & $A_{1,1}^{7}A_{3,2}^{2}D_{5,4}$ & X (Jac) \\
    338 & 96 & $A_{1,1}^{10}C_{3,2}D_{5,4}$ & X (Jac) \\
    339 & 96 & $A_{1,1}^{3}A_{3,2}^{3}D_{4,3}G_{2,2}$ & X (Jac) \\
    340 & 96 & $A_{1,1}^{6}A_{3,2}C_{3,2}D_{4,3}G_{2,2}$ & X (Jac) \\
    341 & 96 & $A_{1,1}^{6}A_{3,2}A_{5,3}G_{2,2}^{2}$ & X (Jac) \\
    342 & 96 & $A_{1,1}^{3}A_{3,2}^{3}G_{2,2}^{3}$ & X (Jac) \\
    343 & 96 & $A_{1,1}^{6}A_{3,2}C_{3,2}G_{2,2}^{3}$ & X (Jac) \\
    344 & 96 & $A_{1,1}^{2}A_{3,2}^{6}$ & X (Jac) \\
    345 & 96 & $A_{1,1}^{5}A_{3,2}^{4}C_{3,2}$ & X (Jac) \\
    346 & 96 & $A_{1,1}^{8}A_{3,2}^{2}C_{3,2}^{2}$ & X (Jac) \\
    347 & 96 & $A_{1,1}^{11}C_{3,2}^{3}$ & X (Jac) \\
    348 & 96 & $A_{1,1}^{11}A_{5,3}D_{4,3}$ & X (Jac) \\
    349 & 96 & $A_{1,1}^{12}A_{3,2}D_{5,4}$ & X (Jac) \\
    350 & 96 & $A_{1,1}^{8}A_{3,2}^{2}D_{4,3}G_{2,2}$ & X (Jac) \\
    351 & 96 & $A_{1,1}^{11}C_{3,2}D_{4,3}G_{2,2}$ & X (dim)\\
    352 & 96 & $A_{1,1}^{11}A_{5,3}G_{2,2}^{2}$ & X (Jac) \\
    353 & 96 & $A_{1,1}^{8}A_{3,2}^{2}G_{2,2}^{3}$ & X (Jac) \\ 354 & 96 & $A_{1,1}^{11}C_{3,2}G_{2,2}^{3}$ & X (dim)\\
    355 & 96 & $A_{1,1}^{7}A_{3,2}^{5}$ & X (Jac) \\
    356 & 96 & $A_{1,1}^{10}A_{3,2}^{3}C_{3,2}$ & X (Jac) \\
    357 & 96 & $A_{1,1}^{13}A_{3,2}C_{3,2}^{2}$ & X (Jac) \\
    358 & 96 & $A_{1,1}^{17}D_{5,4}$ & X (char)\\
    359 & 96 & $A_{1,1}^{13}A_{3,2}D_{4,3}G_{2,2}$ & X (dim) \\
    360 & 96 & $A_{1,1}^{13}A_{3,2}G_{2,2}^{3}$ & X (dim) \\
\hline
    \end{tabular}
\end{center}

\begin{center}
 \begin{tabular}{|p{0.7cm}p{1.3cm}p{3cm}p{1.5cm}|}
    \hline
    \# & $\dim V_1$ & Root system & Result\\
    \hline
    \hline 
    361 & 96 & $A_{1,1}^{12}A_{3,2}^{4}$ & \Checkmark\\
    362 & 96 & $A_{1,1}^{15}A_{3,2}^{2}C_{3,2}$ & X (char)\\
    363 & 96 & $A_{1,1}^{18}C_{3,2}^{2}$  & X (dim) \\
    364 & 96 & $A_{1,1}^{18}D_{4,3}G_{2,2}$ & X (dim) \\
    365 & 96 & $A_{1,1}^{18}G_{2,2}^{3}$ & X (dim) \\
    366 & 96 & $A_{1,1}^{17}A_{3,2}^{3}$ & X (char) \\
    367 & 96 & $A_{1,1}^{20}A_{3,2}C_{3,2}$ & X (dim) \\
    368 & 96 & $A_{1,1}^{22}A_{3,2}^{2}$ & X (dim) \\
    369 & 96 & $A_{1,1}^{25}C_{3,2}$ & X (dim) \\
    370 & 96 & $A_{1,1}^{27}A_{3,2}$ & X (dim) \\
    371 & 96 & $A_{1,1}^{32}$ & \Checkmark (lat) \\
\hline
    372 & 104 & $F_{4,4}^{2}$ & \Checkmark \\
\hline
    373 & 120 & $A_{10,4}$  & X (dim)\\
\hline
    374 & 128 & $A_{2,1}B_{2,1}B_{5,3}^2$ & X (dim)\\
    375 & 128 & $A_{2,1}B_{2,1}B_{5,3}C_{5,2}$ & X (dim)\\
    376 & 128 & $A_{2,1}B_{2,1}C_{5,2}^{2}$  & X (dim)\\
    377 & 128 & $A_{2,1}^{3}F_{4,3}^{2}$ & X (dim)\\
    378 & 128 & $A_{8,3}B_{2,1}^{2}D_{4,2}$  & X (dim)\\
    379 & 128 & $A_{5,2}B_{2,1}B_{5,3}D_{4,2}$ & X (Jac) \\
    380 & 128 & $A_{5,2}B_{2,1}C_{5,2}D_{4,2}$ & X (Jac) \\
    381 & 128 & $B_{2,1}^{2}D_{4,2}^{2}F_{4,3}$  & X (Jac) \\
    382 & 128 & $B_{2,1}^{5}E_{6,4}$ & X (dim)\\
    383 & 128 & $A_{2,1}A_{8,3}B_{2,1}^{4}$ & X (Jac) \\
    384 & 128 & $A_{2,1}A_{5,2}B_{2,1}^{3}B_{5,3}$ & X (Jac) \\
    385 & 128 & $A_{2,1}A_{5,2}B_{2,1}^{3}C_{5,2}$ & X (dim)\\
    386 & 128 & $A_{2,1}B_{2,1}^{4}D_{4,2}F_{4,3}$ & X (Jac) \\
    387 & 128 & $A_{2,1}^{5}B_{2,1}E_{6,4}$ & X (Jac) \\
    388 & 128 & $A_{2,1}^{6}A_{8,3}$ & X (Jac) \\
    389 & 128 & $A_{5,2}^{2}B_{2,1}^{3}D_{4,2}$ & X (dim)\\
    390 & 128 & $A_{2,1}^{2}D_{4,2}^{4}$ & X (dim)\\
    391 & 128 & $A_{2,1}^{2}B_{2,1}^{6}F_{4,3}$ & X (Jac) \\
    392 & 128 & $A_{2,1}^{6}D_{4,2}F_{4,3}$ & X (Jac) \\
    393 & 128 & $A_{2,1}A_{5,2}^2B_{2,1}^{5}$ & X (Jac) \\
    394 & 128 & $A_{2,1}^{3}B_{2,1}^{2}D_{4,2}^{3}$ & X (Jac) \\
    395 & 128 & $A_{2,1}^{7}B_{2,1}^{2}F_{4,3}$ & X (Jac) \\
    396 & 128 & $A_{2,1}^{6}A_{5,2}^{2}B_{2,1}$ & X (Jac) \\
    397 & 128 & $B_{2,1}^{10}D_{4,2}$ & X (Jac) \\
    398 & 128 & $A_{2,1}^{4}B_{2,1}^{4}D_{4,2}^{2}$ & X (Jac) \\
    399 & 128 & $A_{2,1}B_{2,1}^{12}$ & X (dim)\\
    400 & 128 & $A_{2,1}^{5}B_{2,1}^{6}D_{4,2}$ & X (Jac) \\
\hline 
    \end{tabular}
\end{center}
\end{multicols}

\begin{multicols}{2}
\begin{center}
  \begin{tabular}{|p{0.7cm}p{1.3cm}p{3cm}p{1.5cm}|}
    \hline
    \# & $\dim V_1$ & Root system & Result\\
    \hline
    \hline 
    401 & 128 & $A_{2,1}^{9}D_{4,2}^{2}$ & X (Jac) \\
    402 & 128 & $A_{2,1}^{6}B_{2,1}^{8}$ & X (Jac) \\
    403 & 128 & $A_{2,1}^{10}B_{2,1}^{2}D_{4,2}$ & X (dim)\\
    404 & 128 & $A_{2,1}^{11}B_{2,1}^{4}$ & X (char)\\
    405 & 128 & $A_{2,1}^{16}$ & \Checkmark (lat) \\
\hline
    406 & 144 & $B_{4,2}^{4}$ & \Checkmark (DGM)\\
    407 & 144 & $A_{6,2}^{3}$ & X (dim)\\
\hline
    408 & 160 & $C_{3,1}^{2}D_{5,2}^{2}G_{2,1}^{2}$ & X (Jac) \\
    409 & 160 & $D_{5,2}^{2}G_{2,1}^{5}$ & X (Jac) \\
    410 & 160 & $A_{3,1}^{3}C_{3,1}^{2}D_{5,2}G_{2,1}^{2}$ & X (Jac) \\
    411 & 160 & $A_{3,1}^{3}D_{5,2}G_{2,1}^{5}$ & X (Jac) \\
    412 & 160 & $A_{3,1}^{6}C_{3,1}^{2}G_{2,1}^{2}$ & X (dim)\\
    413 & 160 & $A_{3,1}^{6}G_{2,1}^{5}$ & X (dim)\\
\hline
    414 & 192 & $B_{3,1}C_{9,2}$ & X (dim)\\
    415 & 192 & $A_{9,2}B_{3,1}C_{4,1}^{2}$ & X (Jac) \\
    416 & 192 & $A_{4,1}C_{4,1}D_{6,2}^{2}$ & X (dim)\\
    417 & 192 & $A_{4,1}^{3}A_{9,2}B_{3,1}$ & X (Jac) \\
    418 & 192 & $B_{3,1}^{4}C_{4,1}^{3}$ & X (dim)\\
    419 & 192 & $A_{4,1}^{2}C_{4,1}^{4}$ & X (dim)\\
    420 & 192 & $B_{3,1}^{6}D_{6,2}$ & X (Jac) \\
    421 & 192 & $A_{4,1}^{2}B_{3,1}^{2}C_{4,1}D_{6,2}$ & X (Jac) \\
    422 & 192 & $A_{4,1}B_{3,1}^{8}$ & X (dim)\\
    423 & 192 & $A_{4,1}^{3}B_{3,1}^{4}C_{4,1}$ & X (Jac) \\
    424 & 192 & $A_{4,1}^{5}C_{4,1}^{2}$ & X (Jac) \\
    425 & 192 & $A_{4,1}^{8}$ & \Checkmark (lat)\\
\hline
    426 & 224 & $D_{7,2}E_{7,3}$ & X (dim)\\
    427 & 224 & $C_{5,1}D_{7,2}E_{6,2}$ & X (dim)\\
    428 & 224 & $A_{5,1}D_{4,1}^{2}E_{7,3}$ & X (dim)\\
    429 & 224 & $A_{5,1}C_{5,1}D_{4,1}^{2}E_{6,2}$ & X (dim)\\
    430 & 224 & $A_{5,1}^{3}D_{4,1}D_{7,2}$ & X (dim)\\
    431 & 224 & $A_{5,1}^{4}D_{4,1}^{3}$  & X (dim)\\
    432 & 224 & $D_{4,1}^{8}$ & \Checkmark (lat)\\
\hline
    433 & 272 & $B_{8,2}^{2}$ & \Checkmark (DGM) \\
\hline
    434 & 288 & $D_{5,1}^{3}D_{9,2}$ & X (dim)\\
    435 & 288 & $A_{7,1}D_{5,1}^{5}$ & X (dim) \\
\hline
    436 & 320 & $A_{8,1}C_{8,1}F_{4,1}^{2}$ & X (dim)\\
    437 & 320 & $A_{8,1}B_{5,1}E_{7,2}F_{4,1}$ & X (dim)\\
    438 & 320 & $A_{8,1}^{4}$ & \Checkmark (lat) \\
\hline
    439 & 416 & $D_{7,1}D_{13,2}$ & X (dim)\\
    440 & 416 & $A_{11,1}D_{7,1}^{3}$ & X (dim)\\
\hline
\end{tabular}
\end{center}

\begin{center}
\begin{tabular}{|p{0.7cm}p{1.3cm}p{3cm}p{1.5cm}|}
    \hline
    \# & $\dim V_1$ & Root system & Result\\
    \hline
    \hline 
    441 & 416 & $D_{7,1}^{2}E_{6,1}^{3}$  & X (dim)\\
\hline
    442 & 480 & $D_{8,1}^{4}$ & \Checkmark (lat)\\
\hline
    443 & 528 & $B_{16,2}$ & X (dim)\\
\hline
    444 & 576 & $A_{16,1}^{2}$ & \Checkmark (lat)\\
\hline 
    445 & 992 & $D_{16,1}^{2}$ & \Checkmark (lat)\\
    446 & 992 & $D_{16,1}E_{8,1}^{2}$ & \Checkmark (lat)\\
    447 & 992 & $E_{8,1}^{4}$ & \Checkmark (lat) \\
\hline
    448 & 1088 & $A_{32,1}$ & X (dim) \\
\hline
    449 & 2016 & $D_{32,1}$ & \Checkmark (lat)\\
    \hline 
    \end{tabular}
\end{center}
    \end{multicols}

\end{document}